\numberwithin{equation}{section}
\newtheorem{theorem}[equation]{Theorem}
\newtheorem{lemma}[equation]{Lemma}
\newtheorem{prop-def}[equation]{Proposition-Definition}
\theoremstyle{definition}
\newtheorem{definition}[equation]{Definition}
\theoremstyle{remark}
\newtheorem{remark}[equation]{Remark}
\def\Hom{\mathrm{Hom}}
\def\Mod{\mathrm{Mod}}
\def\End{\mathrm{End}}
\def\Gr{\mathrm{Gr\,}}
\newenvironment{point}[2]%
  {\ifx*#2\let\pointlabel\relax\else\def\pointlabel{#2}\fi
   \refstepcounter{equation}\trivlist
   \item[\hskip\labelsep\theequation.
         \ifx\pointlabel\relax\else\space\pointlabel\space\fi]
   \ignorespaces #1
  }{\relax}
\begin{document}
\setlength{\itemsep}{-0.25cm}
\fontsize{12}{\baselineskip}\selectfont
\setlength{\parskip}{0.34\baselineskip}
\vspace*{-12mm}
\title[Clifford Superalgebras]{\fontsize{13}{\baselineskip}\selectfont Graded Morita equivalence of Clifford superalgebras}
\author{Deke  Zhao}
\address{School of Applied Mathematics, Beijing Normal University at Zhuhai,
Zhuhai 519087, China} \email{deke@amss.ac.cn}
\thanks{The author is supported in part by NSFC (No.~11001253 and No.~11101037).}

\subjclass[2010]{Primary 15A66, 15A75; Secondary 16G99, 19A99.}
\keywords{Clifford superalgebras; graded Morita equivalence; graded basic superalgebras.}

\begin{abstract}
This note uses a variation of graded Morita theory for finite dimensional superalgebras to determine explicitly
 the graded basic superalgebras for all real and complex Clifford superalgebras. As an application,
 the Grothendieck groups of the category of left $\mathbb{Z}_2$-graded modules over all real and
 complex Clifford superalgebras are described explicitly.
\end{abstract}
\maketitle
\vspace*{-10mm}
\section{Introduction}
Clifford (super)algebras or special Grassmann (super)algebras play an important role in many branches of
mathematics such as Clifford analysis, algebras, mathematics physics, geometry and topology etc., see for
example \cite{abs,V}.  For the finite dimensional superalgebras, there is a natural graded Morita theory,
which states that a finite dimensional superalgebra is graded Morita  equivalent to a
 graded basic superalgebra, see \cite[Theorem 2.3]{han} for a more general context.

     Note that some important algebraic objects such as
   the Hochschild and cyclic (co)homology of superalgebras are graded Morita invariants \cite{zhao} and
   that the fundamental relationship between Clifford (super)algebras and Bott periodicity is established by
 Atiyah, Bott and Shapiro \cite{abs}, see also \cite{Lawson} for a exposition. It is a natural and interesting problem
   to determine explicitly the graded basic superalgebras for Clifford superalgebras. In this note, using a
    variation of the graded Morita equivalent theory for finite dimensional
superalgebras (Proposition-Definition~\ref{psbasicdef}), we determine explicitly the graded basic superalgebras for
all real and complex Clifford superalgebras (Theorems~\ref{tclifford}
 and \ref{thm-complex}). As an application, we determine explicitly the Grothendieck groups of $\mathbb{Z}_2$-graded modules over all real and complex Clifford superalgebras (Theorem~\ref{thm: Grothendieck}), which is useful to our
  understanding of the Atiyah-Bott-Shapiro isomorphisms.

The lay-out of this note as follows. We begin in Section~\ref{sec: Preliminaries} with preliminaries on superalgebras
 and fix our notations. In section~\ref{sec: Morita} we review briefly the graded Morita theory for superalgebras
 and give some useful lemmas. The graded basic superalgebras for all real and complex Clifford
  superalgebras are determine explicitly in section~\ref{sec: Main}. Finally, as an application, we determine
   explicitly the Grothendieck groups of $\mathbb{Z}_2$-graded modules
   over all real and complex Clifford superalgebras in section~\ref{application}.

\section{Preliminaries on superalgebras}\label{sec: Preliminaries}
 In this section, we recall some facts on superalgebras and fix the notations, our references are
  \cite{bass}, \cite[Chapter 3]{manin} and \cite[\S 1]{kassel}.

 \begin{point}{}*
Let $K$ be a field and $\mathbb{Z}_2:=\mathbb{Z}/2\mathbb{Z}=\{0,
1\}$. By a \textit{superspace} we mean a $\mathbb{Z}_2$-graded
$K$-vector space $V$, namely a $K$-vector space with a decomposition
into two subspaces $V = V_{0}\oplus V_{1}$. A nonzero element $v$ of
$V_i$ will be called \textit{homogeneous} and we denote its degree by
$|v|=i\in \mathbb{Z}_2$. We will view  $K$ as a superspace concentrated in degree 0.

Given   superspaces $V$ and $W$, we view the direct sum $V\oplus W$
and the tensor product $V\otimes_K W$  as superspaces with $(V\oplus
W)_i = V_i\oplus W_i$, and $(V\otimes_K W)_i= V_0\otimes_K W_i\oplus
V_1\otimes_K W_{1-i}$  for $i\in\mathbb{Z}_2$. With this grading,
$V\otimes_K W$ is called the \textit{skew tensor product} of $V$ and
$W$ and is denoted by $V\widehat{\otimes}_KW$. Also, we make the
vector space $\Hom_K(V, W)$ of all $K$-linear maps from $V$ to $W$
into a superspace by setting that $\Hom_K(V, W)_i$ consists of all
the $K$-linear maps $f: V \rightarrow W$ with $f(V_j)\subseteq
W_{i+j}$ for $i, j\in \mathbb{Z}_2$. Elements of
$\Hom_K(V, W)_0$ (resp.~$\Hom_K(V, W)_1$) will be referred to as
\textit{even (resp.~odd) linear maps}.\end{point}

\begin{point}{}* Recall that a \textit{superalgebra} $A$ is both a superspace
and  an associative  $K$-algebra  with identity such that
$A_iA_j\subseteq A_{i+j}$ for $i, j \in \mathbb{Z}_2$. By a
superalgebra \textit{homomorphism} we mean an even linear map which is an
algebra homomorphism in the usual sense and write $A\cong B$ provided that the superalgebras
$A$ and $B$ are isomorphic.

Given  two  superalgebras $A$ and $B$, the \textit{skew tensor
product} $A\widehat{\otimes}_{K} B$  is again a superalgebra with
the inducing grading and multiplication given by
\begin{align*}(a_1 {\otimes} b_1)(a_2 {\otimes} b_2) =
(-1)^{|b_1||a_2|}a_1a_2{\otimes} b_1b_2, \text{ for } a_i\in A \text{ and } b_i\in B.\end{align*}
Note this and other such expressions only make sense for
homogeneous elements.
Observe that the skew tensor product $A\widehat{\otimes}_{K} A\widehat{\otimes}_{K}\cdots\widehat{\otimes}_{K}A$ ($n$ factors) is well-defined and that the \textit{supertwist
map} $T_{A,B}: A\widehat{\otimes}_K B\rightarrow
B\widehat{\otimes}_K A$, $a\otimes b \mapsto (-1)^{|a||b|}b\otimes
a$, for  $a\in A, b \in B$, is an isomorphism of superalgebras. \end{point}

\begin{point}{}*The main principle for superalgebras is the following  \textit{rule
of signs} \cite[\S III.4]{good} or \cite[\S3.1.1]{manin}: if in some formula of usual algebra there are
monomials with interchanged terms, then in the corresponding formula
in superalgebra every interchange neighboring terms, say $a$ and
$b$, is accompanied by the multiplication of the monomial by the
factor $(-)^{|a||b|}$; or equivalently, each letter $a_i$ appearing
as an argument on the left-hand side (LHS) of the defining equation
has a degree associated with it, and a factor of (-1) is introduced
on the right-hand side (RHS) each time a pair of letters on the LHS,
both of odd degree, appearing in reverse order on the RHS. In order
for this rule make sense it is essential that every letter on the
LHS should appear exactly once on the RHS.
\end{point}

\begin{point}{}*
 Let $A$ be a superalgebra. By  a
graded $A$-module $M$ we mean a $\mathbb{Z}_2$-graded left
$A$-module, that is, $M$ is both a superspace $M = M_0\oplus  M_1$
and a left $A$-module such that $A_iM_j \subseteq M_{i+j}$ for
$i, j \in \mathbb{Z}_2$. We denote by $\widehat{A}$ the superalgebra with the same underlying
superspace as $A$ but new multiplication $\hat{a}\hat{b}=(-1)^{|a||b|}\widehat{ab}$.
If $M$ is a graded $A$-module, let $\widehat{M}$ denote the
$\widehat{A}$-module with $M$ as the underlying superspace and
operators defined as $\hat{a}\hat{m}\!=\!(-1)^{|a||m|}\widehat{am}$ for
$a\in\!A,m\in\!M$.

 Unless  otherwise explicitly stated, all our
modules will be $\mathbb{Z}_2$-graded left modules. We denote by
$\mathrm{Mod} A$ the category of all $A$-modules with morphisms
\begin{align*}\Hom_{\Mod A}(M, N):=\Hom_{\Mod A}(M,
N)_0+\Hom_{\Mod A}(M,N)_1,\end{align*}
where $\Hom_{\Mod A}(M, N)_i$, $i\in\mathbb{Z}_2$, is consisting of
all $K$-linear maps $f$ from $M$ to $N$
 such that $f(M_j)\subseteq N_{i+j}$ and $f(am)=(-1)^{i|a|}af(m)$,
 for all $a\in A$, $m\in M$ and  $j\in\mathbb{Z}_2$. The elements of $\Hom_{\Mod A}(M,
N)_0$ (resp.~$\Hom_{\Mod A}(M, N)_1$)  are called \textit{even (resp.~odd) homomorphisms} form $M$ to $N$. Let $\mathrm{Gr} A$ be the category of all $A$-modules with even
homomorphisms. \end{point}

\section{Graded Morita equivalent theory}\label{sec: Morita}
\begin{definition}\label{shifts}
Let $A$ be a superalgebra. The \textit{ parity change} (reps.~\textit{suspension})  functor $\pi$ (resp.~$\sigma$) from $\mathrm{Gr} A$ to itself is defined as following:  for
$M=M_0\oplus M_1$ in $\mathrm{Gr} A$, we define the graded
$A$-module $\pi(M)$ (resp.~$\sigma(M)$) by the conditions:
\begin{enumerate}
\item $\pi(M)$ and $\sigma(M)$ are superspaces with $\pi(M)_i=\sigma(M)_i=M_{i+1}$ for $i\in \mathbb{Z}_2$;

\item the module structure on $\pi(M)$ (resp.~$\sigma(M)$) is defined by $a\pi(m):=(-1)^{|a|}\pi(am)$ (resp. $a\sigma(m):=\sigma(am)$)
for homogeneous elements $a\in A$ and $m\in M$.
\end{enumerate}
\vspace{-2.3mm}
For a homomorphism $f$, we define $\pi(f)$ (resp.~$\sigma(f)$) is the
same underlying linear map as $f$.
\end{definition}

\begin{remark} It follows by definition that $\pi^2=\mathrm{Id}_{\mathrm{Gr} A}$ and $\sigma^2=
\mathrm{Id}_{\mathrm{Gr}A}$, which means that the parity change functor $\pi$ and the suspension functor
$\sigma$ are shifts of the category $\mathrm{Gr}A$ in the sense of \cite[Definition 3.2]{zhang} and \cite[\S2]{s}.  \end{remark}

From now on we assume that $S$ is either the parity change functor $\pi$ or the suspension functor
$\sigma$, unless otherwise explicitly stated. For  an  $A$-module $M$, following \cite[\S2]{s}, we
define the $S$-{\it twisted endomorphism superalgebra} of $M$  to be
the superalgebra
\begin{align*}\mathrm{End}_A^{S}(M):=\Hom_{\Gr A}(M,
M)\oplus\Hom_{\Gr A}(S(M), M),\end{align*}
and define the $S$-{\it twisted $\Hom$ functor} to be the
functor\begin{align*}\Hom^{S}_A(M, -): \Gr A\rightarrow \Gr
\End_A^{S}(M),\quad  N\mapsto \Hom_{\Gr A}(M,
N)\oplus\Hom_{\Gr A}(S(M), N).\end{align*}
Denote by $\Mod^{S} A$ the category of all graded $A$-modules
with homomorphisms $\Hom_A^{S}(M, N)$.

\begin{lemma}[cf.~\cite{bass}, p.\!~123] Assume that $A$ is a finite dimensional superalgebras and that
$M$ and $N$ are $A$-modules. Then $\mathrm{Hom}_{\Mod^{\pi} A}(M, N)\!=\!\Hom_{\Mod^{\sigma}
\widehat{A}}(\widehat{M}, \widehat{N})$ and $\Mod^{\pi} A=\Mod A=\Mod^{\sigma}\widehat{A}$.\label{lbass}\end{lemma}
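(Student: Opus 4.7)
The plan is to unwind both sides of the asserted equality and to exhibit each of them as the standard graded $\Hom$-space
\[
\Hom_{\Mod A}(M,N) = \Hom_{\Mod A}(M,N)_0 \oplus \Hom_{\Mod A}(M,N)_1.
\]
By the very definition of the twisted $\Hom$-functor, $\Hom_{\Mod^{S} A}(M,N)$ splits as $\Hom_{\Gr A}(M,N)\oplus \Hom_{\Gr A}(S(M),N)$, so the task reduces to matching the first summand with $\Hom_{\Mod A}(M,N)_0$ and the second with $\Hom_{\Mod A}(M,N)_1$, in both the $(A,\pi)$ and the $(\widehat{A},\sigma)$ cases.

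First I would treat the parity-change side. The identification $\Hom_{\Gr A}(M,N)=\Hom_{\Mod A}(M,N)_0$ is tautological from the definitions. For the second summand I would compose with the canonical odd bijection $\pi\colon M\to \pi(M)$, sending $g\mapsto g\circ\pi$. Using the relation $a\pi(m)=(-1)^{|a|}\pi(am)$ from Definition~\ref{shifts}, the even $A$-linearity condition $g(a\pi(m))=ag(\pi(m))$ rewrites as $(g\circ\pi)(am)=(-1)^{|a|}\,a(g\circ\pi)(m)$, which is exactly the defining relation for an element of $\Hom_{\Mod A}(M,N)_1$. This gives $\Hom_{\Mod^{\pi} A}(M,N)=\Hom_{\Mod A}(M,N)$, and hence $\Mod^{\pi} A=\Mod A$.

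Next I would address the suspension side, exploiting that $\widehat{M}$ and $M$ coincide as superspaces, so any linear map between them is literally a linear map $M\to N$. For an even $f\colon \widehat{M}\to \widehat{N}$, the two sign factors $(-1)^{|a||m|}$ produced on the two sides of $f(\hat a\hat m)=\hat a f(\hat m)$ by the hat multiplications in $\widehat{A}$ and $\widehat{N}$ cancel, so $f$ is ordinarily $A$-linear and lies in $\Hom_{\Mod A}(M,N)_0$. For an even $g\colon \sigma(\widehat{M})\to \widehat{N}$, the suspension forces the corresponding $\tilde g\colon M\to N$ to be odd, so $|\tilde g(m)|=|m|+1$; the extra factor of $(-1)^{|a|}$ this contributes through the hat multiplication on $\widehat{N}$ collapses the $\widehat{A}$-linearity of $g$ into $\tilde g(am)=(-1)^{|a|}a\tilde g(m)$, i.e.\ $\tilde g \in \Hom_{\Mod A}(M,N)_1$. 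Putting the two computations together yields $\Hom_{\Mod^{\sigma}\widehat{A}}(\widehat{M},\widehat{N})=\Hom_{\Mod A}(M,N)$, which gives the remaining categorical equality.

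The argument is essentially a chain of definitional unravellings, so the only genuine obstacle is disciplined sign bookkeeping: signs arise simultaneously from $\pi$ or $\sigma$, from the twisted multiplications in $\widehat{A}$ and $\widehat{N}$, and from the convention $f(am)=(-1)^{i|a|}af(m)$ defining $\Hom_{\Mod A}(M,N)_i$. In each of the four pieces above one must verify that these various $(-1)^{?}$ factors combine to yield exactly the desired parity condition—after which finite-dimensionality of $A$ plays no further role.
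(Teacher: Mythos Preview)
Your proposal is correct and follows essentially the same approach as the paper: both arguments reduce to unwinding the definitions of $\pi$, $\sigma$, and the hat construction, and checking that the resulting sign factors collapse to the defining conditions for $\Hom_{\Mod A}(M,N)_0$ and $\Hom_{\Mod A}(M,N)_1$. The only organizational difference is that the paper first matches $\Hom_{\Gr \widehat{A}}(\sigma(\widehat{M}),\widehat{N})$ directly with $\Hom_{\Gr A}(\pi(M),N)$ to obtain the equality $\Hom_{\Mod^{\pi} A}(M,N)=\Hom_{\Mod^{\sigma}\widehat{A}}(\widehat{M},\widehat{N})$, and only afterwards identifies $\Hom_{\Gr A}(\pi(M),N)$ with $\Hom_{\Mod A}(M,N)_1$; you instead identify each of the two twisted $\Hom$'s separately with $\Hom_{\Mod A}(M,N)$, which is marginally more symmetric but amounts to the same computations.
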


\begin{proof}First note that for $A$-modules $M$ and $N$, by definition,
\begin{align*}&\mathrm{Hom}_{\Mod^{\pi} A}(M, N)=\Hom_{A}^{\pi}(M,N)=\Hom_{\Gr A}(M,N)\oplus\Hom_{\Gr A}(\pi(M),N) \text{ and}\\
&\Hom_{\Mod^{\sigma}
\widehat{A}}(\widehat{M}, \widehat{N})=\Hom_{\widehat{A}}^{\sigma}(\widehat{M},\widehat{N})=\Hom_{\Gr \widehat{A}}(\widehat{M},\widehat{N})\oplus\Hom_{\Gr \widehat{A}}(\sigma(\widehat{M}),\widehat{N}). \end{align*}
Secondly by definition, we have
\begin{align*}\Hom_{\Gr \widehat{A}}(\widehat{M},\widehat{N})&=\{f: \widehat{M}\rightarrow\widehat{N}\mid f(\widehat{M}_i)\subseteq\widehat{N}_i \text{ and } f(\hat{a}\hat{m})=\hat{a}f(\hat{m}), \forall  i\in\mathbb{Z}_2, a\in A, m\in M\!\}\\
&=\{f: M\rightarrow N\mid f(M_i)\subseteq N_i \text{ and }f(am)=af(m), \forall  i\in\mathbb{Z}_2, a\in A, m\in M\!\}\\
&=\Hom_{\Gr A}(M,N);\\
\Hom_{\Gr \widehat{A}}(\sigma(\widehat{M}),\widehat{N})&=\!\!\{\!f\!: \sigma(\widehat{M})\!\rightarrow\!\widehat{N}\!\mid\! f(\sigma(\widehat{M})_i)\!\subseteq\!\widehat{N}_i\text{ and } f(\hat{a}\sigma(\hat{m}))\!\!=\!\!\hat{a}f(\sigma(\hat{m})), \forall  i\!\in\!\mathbb{Z}_2,a\!\in\! A, m\!\in\! M\!\}\\
&=\!\!\{\!f\!: M\!\rightarrow\! N\!\mid\! f(M_i)\!\subseteq\! N_{1+i}\text{ and }f(am)\!=\!(-1)^{|a|}af(m), \forall  i\!\in\!\mathbb{Z}_2,a\!\in\! A, m\!\in\! M\}\\
&=\Hom_{\Gr A}(\pi(M),N).
\end{align*}
Therefore  $\mathrm{Hom}_{\Mod^{\pi} A}(M, N)\!=\!\Hom_{\Mod^{\sigma}
\widehat{A}}(\widehat{M}, \widehat{N})$.

Finally note that the objects of the categories $\Mod^{\pi} A$, $\Mod A$ and $\Mod^{\sigma}\widehat{A}$ are same.
By the first part of the lemma, we only need to show that $\Hom_{\Mod^{\pi} A}(M,N)=\Hom_{\Mod A}(M,N)$ for all $A$-modules $M$ and $N$; furthermore, it suffices to show that $\Hom_{\Gr A}(\pi(M),N)=\Hom_{\Mod A}(M,N)_{1}$ for all $A$-modules $M$ and $N$. Indeed for all $A$-modules $M$ and $N$, by definition,
\begin{align*}\Hom_{\Gr A}(\!\pi(M),N\!)&=\!\{\!f\!:\pi(\!M\!)\!\rightarrow\! N\!\mid\!f(\!\pi(M)_i\!)\!\subseteq\!N_i \text{ and } f(a\pi(m))\!=\!af(\pi(m)), \forall i\!\in\! \mathbb{Z}_2, a\!\in\! A, m\!\in\! M\!\}\\
&=\!\{\!f\!:M\!\rightarrow\! N\!\mid\!f(\!M_{i+1}\!)\!\subseteq\!N_{i} \text{ and } f(\!\pi(am)\!)\!=\!(\!-1\!)^{|a|}af(\!\pi(m)\!), \forall i\!\in\! \mathbb{Z}_2, a\!\in\! A, m\!\in\! M\!\}\\
&=\!\{\!f\!:M\!\rightarrow\! N\!\mid\!f(M_i)\!\subseteq\!N_{1+i} \text{ and } f(am)\!=\!(-1)^{|a|}af(m), \forall i\!\in\! \mathbb{Z}_2, a\!\in\! A, m\!\in\! M\!\}\\
&=\Hom_{\Mod A}(M,N)_{1},\end{align*}
where the second and third equalities follow by Definition~\ref{shifts} that $\pi(M)_i=M_{i+1}$, $a\pi(m)=(-1)^{|a|}\pi(am)$, and that $f(\pi(am)=f(am)$ for homogeneous elements $a\in A$, $m\in M$ and homogeneous homomorphisms $f$.
As a consequence, we complete the proof.
\end{proof}

 We say that a non-zero $A$ module $M$ is \textit{gr-indecomposable} if it is not the direct sum of two
non-zero   modules that  $M$ is
\textit{$S$-indecomposable} if it is not the direct sum of two
non-zero $A$-modules in the category $\mathrm{Mod}^{S} A$.  A
superalgebra is called to be \textit{gr-divisional} (resp.~\textit{gr-local}) if
every nonzero homogeneous element of it is invertible (resp.~either
invertible or nilpotent).

\begin{lemma}[\cite{han}, \S2.2] Let $A$ be a finite dimensional
superalgebra over $K$ and $M\in\Gr A$. Then
\begin{enumerate}\item $M$ is $S$-indecomposable if and only if $M$ is gr-indecomposable.

\item $M$ is gr-indecomposable if and only if
 $\mathrm{End}^S_{A}(M)$ is gr-local.

\item If $M$ is  gr-simple  then
$\mathrm{End}^S_{A}(M)$ is either a gr-divisional superalgebra
concentrated in degree zero, or a gr-divisional superalgebra containing an
odd involuting element.
\end{enumerate}
\end{lemma}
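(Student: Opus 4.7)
The plan is to mimic the classical Fitting--Schur strategy (an indecomposable finite-dimensional module has a local endomorphism ring; a simple module has a division endomorphism ring) while tracking the $\mathbb{Z}_2$-grading through the identifications supplied by Lemma~\ref{lbass} and Definition~\ref{shifts}. The key structural fact I will use throughout is that the even part of $\End_A^S(M)$ is canonically $\End_{\Gr A}(M)$, while its odd part records $\Hom_{\Gr A}(S(M),M)$, and that $S^2=\mathrm{Id}_{\Gr A}$ makes the composition of two odd elements land back in the even part, so that nilpotence and invertibility are well-defined notions in the full superalgebra $\End_A^S(M)$.

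For (i), I would show that a non-trivial direct sum decomposition in either $\Gr A$ or $\Mod^S A$ amounts to the existence of a non-trivial even idempotent in $\End_A^S(M)$: every even homomorphism is also a morphism in $\Mod^S A$, and conversely, since the identity on $M$ is even, in any categorical biproduct in $\Mod^S A$ the projection/inclusion idempotents must themselves be even. For (ii), the $(\Leftarrow)$ direction is immediate from (i), because a gr-local superalgebra admits no non-trivial homogeneous (in particular, no non-trivial even) idempotents. For the $(\Rightarrow)$ direction I would prove a graded Fitting lemma: given a homogeneous $f\in\End_A^S(M)$, view $f$ as a $K$-linear endomorphism of $M$ (using $S^2=\mathrm{Id}$ in the odd case to make sense of $f^n$), form the usual decomposition $M=\ker(f^n)\oplus\mathrm{Im}(f^n)$ for $n\gg 0$, and observe that both summands are $A$-stable graded submodules up to a parity shift; gr-indecomposability of $M$ then forces $f$ to be either nilpotent or invertible.

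For (iii), the first half is a graded Schur argument: if $M$ is gr-simple and $f\in\End_A^S(M)$ is a non-zero homogeneous element, then $\ker f$ and $\mathrm{Im}\,f$ are graded $A$-submodules of $M$ (after undoing the shift when $f$ is odd), so gr-simplicity forces $f$ to be invertible and hence $D:=\End_A^S(M)$ is gr-divisional. If $D_1=0$ we are in the first case; otherwise one must produce an odd \emph{involution}, which I expect to be the main obstacle. Starting from any non-zero $g\in D_1$ we have $g^2\in D_0^{\times}$, and the task is to rescale $g$ by an even element $a\in D_0$ so that $(ag)^2=1$. Writing $\phi(a):=gag^{-1}$ for the inner automorphism of the division ring $D_0$, this amounts to solving the twisted norm equation $a\phi(a)=g^{-2}$; I would handle it either by direct case analysis (over $\mathbb{R}$ or $\mathbb{C}$ the only relevant $D_0$ are $\mathbb{R},\mathbb{C},\mathbb{H}$, and in each the equation is solvable by inspection) or, more conceptually, by a Skolem--Noether / Hilbert~90 argument applied to the order-two automorphism $\phi$.
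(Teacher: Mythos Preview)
The paper does not actually prove this lemma; it is quoted from \cite{han} without proof, so there is no ``paper's own proof'' to compare against.  That said, your outline for (i) and (ii) is the standard graded Fitting--Schur argument and is essentially what one finds in \cite{han}: idempotent decompositions in $\Mod^S A$ correspond to \emph{even} idempotents of $\End_A^S(M)$ (any homogeneous idempotent is even, and in a finite-dimensional superalgebra every idempotent is conjugate to a homogeneous one), and for homogeneous $f$ one applies ordinary Fitting to the even endomorphism $f^2$ to conclude $f$ is nilpotent or invertible.  The Schur part of (iii), showing $\End_A^S(M)$ is gr-divisional when $M$ is gr-simple, is also correct as you describe it.

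There is, however, a genuine gap in your treatment of the ``odd involuting element''.  First, your automorphism $\phi(a)=gag^{-1}$ need not have order two: one computes $\phi^2(a)=g^2ag^{-2}$, which is inner but not the identity unless $g^2$ is central in $D_0$.  More seriously, the twisted norm equation $a\,\phi(a)=g^{-2}$ is \emph{not} always solvable, even over $K=\mathbb{R}$.  Take $A=M=\mathbb{D}_{-}$ with $S=\sigma$ (or $A=M=\mathbb{D}_{+}$ with $S=\pi$, using Lemma~\ref{lbass}); then $M$ is gr-simple and $\End_A^S(M)\cong\mathbb{D}_{-}$, whose odd part is $\mathbb{R}e_{-}$ with $(re_{-})^2=-r^2\leq 0$ for every $r\in\mathbb{R}$.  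So no odd element squares to $1$, and your ``solvable by inspection'' claim fails already for $D_0=\mathbb{R}$.  The phrase ``odd involuting element'' in the cited source therefore cannot mean an odd element of square~$1$; it should be read as an odd \emph{unit} (equivalently, any non-zero odd element, which exists precisely when $D_1\neq 0$), and with that reading the dichotomy in (iii) is immediate from gr-divisionality.  You should drop the norm-equation step entirely.
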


\begin{definition} Let $A$ and $B$ be two finite dimensional
superalgebras over $K$. We say that $A$ and $B$ are $S$-\textit{graded equivalent}, denote
by $A\thickapprox_{S}B$, if the categories $\Gr A$  and $\Gr B$ are equivalent and $B\!\cong\!\End_A^{S}(P)$
for some finitely generated projective $A$-module $P$. \end{definition}

The relationship between $\pi$-graded equivalence and $\sigma$-graded equivalence is the following:
\begin{lemma}\label{rsp}Assume that finite dimensional superalgebras $A$ and $B$ are $\sigma$-graded equivalent.
Then $A$ and $\widehat{B}$ are $\pi$-graded equivalent.
\end{lemma}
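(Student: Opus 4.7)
The plan is to take the same finitely generated projective $A$-module $P$ satisfying $B \cong \End_A^\sigma(P)$ that realizes the $\sigma$-graded equivalence, and verify both conditions of $A\thickapprox_\pi \widehat{B}$ with the choice $Q := P$: namely, produce an equivalence $\Gr A \simeq \Gr \widehat{B}$ and a superalgebra isomorphism $\widehat{B}\cong \End_A^\pi(P)$.

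For the categorical part, I will factor through $\Gr A \simeq \Gr B \simeq \Gr \widehat{B}$. The first equivalence is given, and for the second the assignment $N\mapsto \widehat N$ defines a functor $\Gr B\to \Gr \widehat{B}$: an even $B$-linear map $f\colon M\to N$ remains even and intertwines the twisted actions because the sign factor $(-1)^{|b||m|}$ appears symmetrically on both sides of $f(\hat b\hat m) = \hat b f(\hat m)$. Since $\widehat{\widehat{B}} = B$ and $\widehat{\widehat{N}} = N$, this functor is its own quasi-inverse.

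For the algebra isomorphism, applying the hat construction to $B \cong \End_A^\sigma(P)$ reduces the task to exhibiting a superalgebra isomorphism $\phi\colon \widehat{\End_A^\sigma(P)} \to \End_A^\pi(P)$. Both superalgebras share $\Hom_{\Gr A}(P,P)$ as their even part, while their odd parts are $\Hom_{\Gr A}(\sigma P,P)$ and $\Hom_{\Gr A}(\pi P,P)$, explicitly described in the proof of Lemma~\ref{lbass}. Take $\phi$ to be the identity in degree zero and $\phi(f)(m) := (-1)^{|m|}f(m)$ on homogeneous odd elements. A short check comparing the relations ``$f(am) = af(m)$'' in the $\sigma$-setting with ``$f(am) = (-1)^{|a|}af(m)$'' in the $\pi$-setting confirms that $\phi(f)$ lies in the required space, and $\phi$ is clearly a bijection of superspaces.

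The main obstacle, and essentially the only nontrivial step, is multiplicativity $\phi(f\star g) = \phi(f)\circ\phi(g)$, where $\star$ denotes the twisted product $f\star g = (-1)^{|f||g|}\,f\circ g$ in $\widehat{\End_A^\sigma(P)}$. Of the four parity cases, even-even is trivial and even-odd, odd-even follow at once from pulling $(-1)^{|m|}$ past an even map; the interesting case is two odd elements, where the twist $(-1)^{|f||g|}=-1$ must cancel against the composite sign $(-1)^{|m|}(-1)^{|g(m)|}=-1$ picked up when $\phi(f)$ acts on $\phi(g)(m)$. Once this sign-bookkeeping checks out, $\phi$ is a superalgebra isomorphism, and since $P$ remains finitely generated projective over $A$ the choice $Q=P$ witnesses $A\thickapprox_\pi \widehat{B}$.
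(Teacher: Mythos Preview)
Your argument is correct and follows the same overall strategy as the paper: keep the projective $A$-module $P$, pass from $\Gr A\simeq\Gr B$ to $\Gr A\simeq\Gr\widehat{B}$ via the hat functor, and then identify the $\sigma$- and $\pi$-twisted endomorphism superalgebras. The paper carries out this last step by invoking Lemma~\ref{lbass} to obtain $\End_{\widehat{A}}^{\pi}(\widehat{P})=\End_{\widehat{\widehat{A}}}^{\sigma}(\widehat{\widehat{P}})=\End_A^{\sigma}(P)\cong B$, whereas you instead construct the explicit superalgebra isomorphism $\widehat{\End_A^{\sigma}(P)}\to\End_A^{\pi}(P)$ via $\phi(f)(m)=(-1)^{|m|}f(m)$ on odd $f$ and check multiplicativity directly. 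These are two packagings of the same sign identity, but your version has the small advantage of landing exactly on $\widehat{B}\cong\End_A^{\pi}(P)$, which is literally what the definition of $A\thickapprox_{\pi}\widehat{B}$ requires; the paper's formulation yields $B\cong\End_{\widehat{A}}^{\pi}(\widehat{P})$, i.e.\ $\widehat{A}\thickapprox_{\pi}B$, and leaves the passage to the stated form implicit. Your explicit odd--odd check also makes visible the multiplicative content that the paper's citation of Lemma~\ref{lbass} (an equality of Hom-sets) takes for granted.
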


\begin{proof}Assume that the categories $\Gr A$ and $\Gr B$ are equivalent and that there is a finitely
 generated projective $A$-module $P$ such that $B\cong \End_A^{\sigma}(P)$. Observe that
 superalgebras $B$ and $\widehat{\widehat{B}}$ are isomorphic and that $\Gr B$ and
 $\Gr \widehat{B}$ are equivalent.  Applying Lemma~\ref{lbass},
  $\End_{\widehat{A}}^{\pi}(\widehat{P})=\End_{\widehat{\widehat{A}}}^{\sigma}(\widehat{\widehat{P}})=
 \End_A^{\sigma}(P)$. The proof is completed.
\end{proof}

Let $A$ be a finite dimensional superalgebra. Then $A$ has a complete set of primitive orthogonal
idempotents and denoted it by  $\{f_1, \cdots, f_n\}$. Let $P_i=Af_i$ be the projective $A$-module corresponding to the primitive orthogonal idempotent $f_i$ of $A$. Then $\widehat{P}_i=\widehat{A}f_i$ and Lemma~\ref{lbass} and \cite[Corollary 3.10]{Da} imply that $\mathrm{Hom}_{\Mod^\pi A}(P_i,
P_j)=\Hom_{\Mod^\sigma A} (\widehat{P}_i,
\widehat{P}_j)=f_i\widehat{A}f_j $  and $\Hom_{\Mod^\sigma A} (P_i,
P_j)=f_iAf_j$.

We say
that two idempotents $f$ and $g$ of $A$ are $S$-\textit{equivalent}
if the $A$-modules $Af$ and $Ag$ are isomorphic in the category $\Mod^S A$. The following is a criterion of $S$-equivalent of idempotents.

\begin{lemma}\label{lequivalent}The idempotents  $f$ and $g$ are
$S$-equivalent if and only if
 there exist homogeneous elements $x\in fAg$ and $y\in gAf$ satisfying $xgy=f$ and
 $yfx=g$.
\end{lemma}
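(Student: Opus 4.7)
The plan is to translate the isomorphism $Af\cong Ag$ in $\Mod^S A$ into a multiplicative condition on elements of $A$, using the identifications of hom-spaces recorded just before the lemma:
\[
\Hom_{\Mod^\sigma A}(Af,Ag)=fAg, \qquad \Hom_{\Mod^\pi A}(Af,Ag)=f\widehat{A}g,
\]
where $\phi\mapsto\phi(f)$. Note that $\phi(f)$ indeed lies in $fAg$ (resp.~$f\widehat{A}g$), since $|f|=0$ gives $\phi(f)=\phi(f\cdot f)=f\phi(f)$ and $\phi(f)\in Ag$ by definition. Under these identifications, composition $\psi\circ\phi$ corresponds to multiplication of the images in $A$ (for $S=\sigma$) or in $\widehat{A}$ (for $S=\pi$), and the $\mathbb{Z}_2$-grading is preserved, so homogeneous morphisms correspond to homogeneous elements.

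For the sufficiency direction, suppose we are given homogeneous $x\in fAg$ and $y\in gAf$ with $xgy=f$ and $yfx=g$. Since $x\in fAg$ and $y\in gAf$ force $xg=x$ and $yf=y$, the hypotheses collapse to $xy=f$ and $yx=g$ in $A$. I would let $\phi:Af\to Ag$ be the morphism in $\Mod^S A$ corresponding to $x$, and $\psi:Ag\to Af$ the one corresponding to $y$; in the $\sigma$-case this is literally $af\mapsto ax$ and $ag\mapsto ay$, while in the $\pi$-case the morphisms are obtained via the $\widehat{A}$-identification of Lemma~\ref{lbass}, possibly after replacing $x$ by $-x$ to absorb the sign $(-1)^{|x||y|}$ which appears when $x,y$ are both odd. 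The composition rule then gives $\psi\phi(f)=xy=f$ and $\phi\psi(g)=yx=g$, so $\phi$ and $\psi$ are mutually inverse isomorphisms in $\Mod^S A$.

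For the necessity direction, suppose $\phi:Af\to Ag$ is an isomorphism in $\Mod^S A$ with inverse $\psi$. After decomposing $\phi$ and $\psi$ into their homogeneous components and using that $\psi\phi$ is the even identity, I would reduce to the case where $\phi$ and $\psi$ are both homogeneous of a common degree $i\in\mathbb{Z}_2$. Setting $x=\phi(f)$ and $y=\psi(g)$ then yields homogeneous elements of $fAg$ and $gAf$ respectively, and translating $\psi\phi=\mathrm{id}_{Af}$ and $\phi\psi=\mathrm{id}_{Ag}$ through the hom-space identifications gives $xy=\varepsilon f$ and $yx=\varepsilon g$ with $\varepsilon=(-1)^{i^2}$ in the $\pi$-case (and $\varepsilon=1$ throughout in the $\sigma$-case). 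A single sign flip $x\mapsto -x$ when $\varepsilon=-1$ yields the desired identities $xgy=f$ and $yfx=g$.

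The principal technical points are reducing a general isomorphism in $\Mod^S A$ to a homogeneous one, and tracking the signs introduced by the skew multiplication on $\widehat{A}$ in the $\pi$-case. Since those signs depend only on the common parity of $x$ and $y$ and appear consistently in both $xy=\pm f$ and $yx=\pm g$, a single rescaling of $x$ suffices to normalize them, and the multiplicative criterion takes the uniform form stated in the lemma for both $S=\pi$ and $S=\sigma$.
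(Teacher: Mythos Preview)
Your approach is essentially the same as the paper's: both directions use the identification $\Hom_{\Mod^S A}(Af,Ag)\cong fAg$ (respectively $f\widehat{A}g$) to pass between isomorphisms in $\Mod^S A$ and pairs of elements, with composition corresponding to multiplication. The paper's proof is terser---it simply asserts that the isomorphism may be taken homogeneous and does not track the $\widehat{A}$-signs in the $\pi$-case---so your added care on the homogeneity reduction and the sign normalization is a refinement rather than a different route.
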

\begin{proof}Note that $\Hom_{\Mod^{S}A}(Af, Ag)$ is either $fAg$ or $f\widehat{A}g$ and that $\Hom_{\Mod^{S}A}
(Ag, Af)$ is either $gAf$ or $g\widehat{A}f$. Assume that $Af$ and $Ag$ are isomorphic in the category $\Mod^{S}A$.
Then there exists invertible
homogenous homomorphisms induced by homogeneous elements $x\in fAg$ and
$y\in gAf$ satisfying $xgy=f$ and  $yfx=g$.
Conversely, suppose that there exist homogeneous elements $x\in fAg$ and $y\in gAf$ satisfying $xgy=f$ and
 $yfx=g$. Then $x$ and $y$ induce natural homomorphisms $Af\rightarrow Ag$, $f\mapsto xg$ and $Ag\rightarrow Af$, $g\mapsto yf$ respectively, which give the desired isomorphisms.
\end{proof}

The following definition is our investigation for all Clifford superalgebras in this note.
\begin{prop-def}[cf.~Proposition 2.4, \cite{han}] Let $A$ be a finite dimensional superalgebra and $J(A)$
the graded Jacobson radical of $A$. We say that the superalgebra $A$ is \textbf{$\mathbf{S}$-graded basic}
if it satisfies the following equivalent conditions:

\begin{enumerate} \item $f_i$  are pairwise non-$S$-equivalent for $1\le
i\le n$.

\item  Any decomposition of $A$ into indecomposable
projective $A$-modules $A = \oplus^n_{i=1}P_i$ satisfies $P_i$ and $P_j$ are not isomorphic in $\Mod^{S}A$ for all $1\le i \neq j\le n$.

 \item $A/J(A)$ is a direct sum of gr-divisional superalgebras.
 \end{enumerate}
\label{psbasicdef}\end{prop-def}

\begin{remark}By \cite[Theorem 2.3]{han}, a finite dimensional superalgebra $A$
 is $S$-graded equivalent to an $S$-graded basic superalgebra. More precisely, let
 $\{e_i\mid 1\leq i\leq n\}$ be a complete set of non-$S$-equivalent orthogonal
 primitive idempotents of $A$. It follows that $A$ is $S$-graded equivalent to
 $\End^S_A(\oplus_{i=1}^nAe_i)$, which is $S$-graded basic.
  \end{remark}

 The following Lemma is the key to the graded Morita classification for Clifford superalgebras.
\begin{lemma}\label{pskewtensor}
Suppose that the superalgebras $A$ and $A'$ are $S$-graded
equivalent to  $B$ and $B'$ respectively. Then $A\widehat{\otimes}_K
A'$ is $S$-graded equivalent to $B\widehat{\otimes}_K B'$.
\end{lemma}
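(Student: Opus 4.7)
The plan is to exhibit the $S$-graded equivalence explicitly by tensoring the data that witnesses the two given equivalences. Choose a finitely generated projective $A$-module $P$ with $B\cong \End_A^{S}(P)$ and a finitely generated projective $A'$-module $P'$ with $B'\cong \End_{A'}^{S}(P')$. Form the skew tensor $P\widehat{\otimes}_K P'$, viewed as an $A\widehat{\otimes}_K A'$-module through the rule of signs of 2.3. Writing $P$ as a direct summand of $A^n$ and $P'$ as a direct summand of $(A')^m$ exhibits $P\widehat{\otimes}_K P'$ as a direct summand of $(A\widehat{\otimes}_K A')^{nm}$, so it is finitely generated projective.

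The heart of the argument is to construct a natural superalgebra isomorphism
\begin{align*}
\mu:\End_A^{S}(P)\,\widehat{\otimes}_K\,\End_{A'}^{S}(P')\longrightarrow \End_{A\widehat{\otimes}_K A'}^{S}(P\widehat{\otimes}_K P'),
\end{align*}
given on homogeneous elements by $\mu(f\otimes f')(p\otimes p')=(-1)^{|f'||p|}f(p)\otimes f'(p')$, as forced by the rule of signs. Its image lies inside the $S$-twisted endomorphism superalgebra thanks to the natural identifications $S(P\widehat{\otimes}_K P')\cong S(P)\widehat{\otimes}_K P'\cong P\widehat{\otimes}_K S(P')$, coming from Definition~\ref{shifts} together with $S^2=\mathrm{Id}$. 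Bijectivity is verified first in the free case $P=A$, $P'=A'$, where both sides reduce tautologically to $A\widehat{\otimes}_K A'$, and then extended to direct summands by additivity of $\Hom$ in each variable.

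Combined with the categorical equivalence $\Gr(A\widehat{\otimes}_K A')\simeq \Gr(B\widehat{\otimes}_K B')$ obtained by tensoring the equivalences $\Gr A\simeq \Gr B$ and $\Gr A'\simeq \Gr B'$ (the module $P\widehat{\otimes}_K P'$ inherits the progenerator property from $P$ and $P'$), the isomorphism $\mu$ yields $A\widehat{\otimes}_K A'\thickapprox_{S} B\widehat{\otimes}_K B'$. The main obstacle will be the sign bookkeeping required for the multiplicativity of $\mu$: the skew tensor multiplication supplies the factor $(-1)^{|f'_1||f_2|}$ in $(f_1\otimes f'_1)(f_2\otimes f'_2)=(-1)^{|f'_1||f_2|}f_1f_2\otimes f'_1f'_2$, and one must carefully check that composing $\mu(f_1\otimes f'_1)\circ \mu(f_2\otimes f'_2)$ produces precisely this sign after tracking the two passages of the $f'_j$ past $p$ inside the definition. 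A secondary subtlety is that an odd$\,\otimes\,$odd element of the left-hand side contributes to the even diagonal summand $\Hom_{\Gr(A\widehat{\otimes} A')}(P\widehat{\otimes} P',P\widehat{\otimes} P')$ on the right, and correctly identifying this summand requires invoking $S^2=\mathrm{Id}$ simultaneously on both factors.
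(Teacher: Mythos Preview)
Your approach is correct but genuinely different from the paper's. The paper argues via idempotent combinatorics: assuming without loss of generality that $B$ and $B'$ are $S$-graded basic, it takes complete sets of primitive orthogonal idempotents $\{e_i\}$, $\{e'_j\}$ of $A$, $A'$ and the subsets $\{f_i\}\subseteq\{e_i\}$, $\{f'_j\}\subseteq\{e'_j\}$ corresponding to $B$, $B'$, observes that $\{e_i\otimes e'_j\}$ is a complete orthogonal family in $A\widehat{\otimes}_KA'$ with each member $S$-equivalent (via Lemma~\ref{lequivalent}) to some $f_k\otimes f'_l$, and concludes that the $S$-graded basic algebra of $A\widehat{\otimes}_KA'$ is reached by further decomposing the $f_i\otimes f'_j$, which is exactly the $S$-graded basic algebra of $B\widehat{\otimes}_KB'$.

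Your route instead builds the witness directly: take $P\widehat{\otimes}_KP'$ and produce the superalgebra isomorphism $\mu$ onto $\End^S_{A\widehat{\otimes}A'}(P\widehat{\otimes}P')$. This is more categorical and does not rely on primitive idempotent decompositions, so in principle it extends beyond the finite-dimensional setting; the paper's argument, by contrast, meshes seamlessly with the basic-algebra machinery of Proposition-Definition~\ref{psbasicdef} used throughout Section~\ref{sec: Main}. One caution: the paper's definition of $A\thickapprox_S B$ lists ``$\Gr A\simeq\Gr B$'' and ``$B\cong\End_A^S(P)$ for some finitely generated projective $P$'' as two separate clauses, not a priori linked by the same $P$. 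Your argument that $\Gr(A\widehat{\otimes}A')\simeq\Gr(B\widehat{\otimes}B')$ leans on $P\widehat{\otimes}P'$ being a progenerator, so you should invoke the graded Morita theorem (e.g.\ \cite[Theorem~2.3]{han}) to justify choosing $P$, $P'$ as progenerators in the first place; once that is said, the rest of your plan goes through.
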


\begin{proof}Without loss of generality, we may assume that both $B$ and $B'$ are $S$-graded basic and that
$\{f_1, \dots, f_r\}$ and $\{f'_1, \dots,
f'_{r'}\}$ are the complete sets of orthogonal primitive idempotents of $B$ and $B'$ respectively.
Now let $\{e_1, \dots, e_n\}$ and $\{e'_1, \dots,
e'_{n'}\}$ be the complete sets of orthogonal primitive idempotents of
$A$ and $A'$ respectively. Then  $\{f_1, \dots, f_r\}\subseteq\{e_1, \dots, e_n\}$ and $\{f'_1, \dots,
f'_{r'}\}\subseteq\{e'_1, \dots,e'_{n'}\}$. Observe that
$\{e_i\otimes e'_j| 1\le i\le n, 1\le j\le n' \}$ and
$\{f_i\otimes f'_j|1\le i\le r, 1\le j\le r'\}$ is a set of
orthogonal idempotents of $A\widehat{\otimes}_K A'$ such that $\sum_{i,j}e_i\otimes e'_j=1_{A}\otimes 1_{A'}$, and
$\{f_i\otimes f'_j|1\le i\le r, 1\le j\le r'\}$ is a set of
orthogonal idempotents of $B\widehat{\otimes}_K B'$ such that $\sum_{i,j}f_i\otimes f'_j=1_{B}\otimes 1_{B'}$.
Note that $\{f_i\otimes f'_j|1\le i\le r, 1\le j\le r'\}\subseteq \{e_i\otimes e'_j| 1\le i\le n, 1\le j\le n' \}$, which implies that the complete set of non-$S$-equivalent
orthogonal primitive idempotents of $A\widehat{\otimes}_K A'$ can be obtained form
$\{f_i\otimes f'_j|1\le i\le r, 1\le j\le r'\}$ by decomposing these orthogonal idempotents. As a consequence,
$A\widehat{\otimes}_K A'$ is $S$-graded  equivalent to
$B\widehat{\otimes}_K B'$.
\end{proof}

We close this section with some remarks.
 \begin{remark}\label{Rem:graded-basic}\begin{enumerate}\item The $\sigma$-graded equivalence
is exactly the so-called \textit{graded Morita equivalence} defined in \cite{zhang,s} for group-graded
 algebras by ignoring the rule of signs in the case of superalgebras.

 \item A finite dimensional superalgebra is $\sigma$-graded equivalent but not $\pi$-graded equivalent to itself.

 \item The rule of signs implies that for finite dimensional superalgebra
 the $\pi$-graded equivalence is deserving of a better understanding than $\sigma$-graded equivalence, that is, for finite dimensional superalgebra
 $A$ the category $\Mod A$ should be study exhaustively.

\item\label{item-pi-sigma} Lemmas~\ref{rsp} and \ref{lequivalent} show that the $S$-graded basic classification of finite dimensional superalgebras can be determined completely by its graded Morita equivalent classification, that is, by its  $\sigma$-graded equivalent classification.
     \end{enumerate}\end{remark}
\section{Graded Morita equivalent classification of Clifford superalgebras}\label{sec: Main}
 From now on we will write $\mathbb{R}$, $\mathbb{C}$
and $\mathbb{H}$ respectively for the real, complex and quaternion
number-fields and view them as superalgebras over $\mathbb{R}$
concentrated on degree zero respectively and we will say a superalgebra to be graded basic if it is either
$\sigma$-graded basic or $\pi$-graded basic (cf.~Remark~\ref{Rem:graded-basic}(iv)). Note that $\mathbb{R}$, $\mathbb{C}$
and $\mathbb{H}$ are gr-divisional superalgebras; in particular, they are graded basic superalgebras according to Proposition-Definition~\ref{psbasicdef}.

\begin{point}{}*\label{pchevalley} Let $p$, $q$ and $r$ be positive integers.
Following Porteous \cite{porteous95}, we
 denote by $\mathbb{R}^{p,
q, r}$ the real quadratic space $\mathbb{R}^{p+q+r}$ with the
quadratic form $\rho(x)=x_1^2+\cdots+ x_{p}^2-x_{p+1}^2-\cdots-x_{p+q}^2$.
The Clifford superalgebra  $\mathbb{R}_{p, q, r}$  on
$\mathbb{R}^{p, q, r}$ is the real unitary superalgebra generated by
odd generators $e_1, \dots, e_{p+q+r}$ subject to the
following relations:\begin{align*}& e_ie_j+e_je_i=0&& \text{for } 1\leq i\neq j\leq p+q+r;\\
& e_i^2=-e_{j+p}^2=1&&\text{for }1\leq i\leq p \text{ and }1\leq j\leq q;\\
&e_{p+q+k}^2=0&&\text{for } 1\le k\leq r.\end{align*}
Note that the Clifford superalgebra
$\mathbb{R}_{0, 0, r}$ is the real Grassmann superalgebra
$\bigwedge_{\mathbb{R}}(r)$ generated by odd generators $e_{p+q+1}, \dots, e_{p+q+r}$. Observe that the orthogonal primitive idempotent of $\bigwedge_{\mathbb{R}}(r)$ is the unit 1 since $\bigwedge_{\mathbb{R}}(r)/J(\bigwedge_{\mathbb{R}}(r))=\mathbb{R}$, which implies $\bigwedge_{\mathbb{R}}(r)$
are  graded basic  for all positive integer $r\ge 1$ according to Proposition-Definition~\ref{psbasicdef}.

For now on we write $\mathbb{R}_{p, q}$ for the Clifford superalgebra
$\mathbb{R}_{p, q, 0}$. It is well-known that there are superalgebras isomorphism $\mathbb{R}_{p, q, r}
\cong \mathbb{R}_{p, q}\widehat{\otimes}_\mathbb{R}\bigwedge_{\mathbb{R}}(r)$ and $\mathbb{R}_{p,
q}\widehat{\otimes}_{\mathbb{R}} \mathbb{R}_{p', q'}\cong
\mathbb{R}_{p+p', q+q'}$ for all positive integers $p$, $q$, $r$, $p'$ and $q'$, see \cite{chev} or \cite[Proposition~1.6]{abs}.\end{point}

\begin{point}{}* Let $V=\mathbb{R}^{p+q+r}$ and $\alpha: V\rightarrow V$, $v\mapsto -v$. Then $\alpha$ induces a
 automorphism of the (ungraded) Clifford algebra $\mathbf{C}\ell(V,\rho)=T(V)/\langle v\otimes v-\rho(v)|v\in V\rangle$
  where $T(V)$ is the tensor algebra of $V$. Since $\alpha^2=\mathrm{Id}$, there is a decomposition
\begin{align*}\mathbf{C}\ell(V,\rho)=\mathbf{C}\ell^0(V,\rho)\oplus \mathbf{C}\ell^1(V,\rho)\end{align*}
where $\mathbf{C}\ell^i(V,\rho)=\{\phi\in\mathbf{C}\ell(V,\rho)\mid\alpha(\phi)=(-1)^i\phi\}$ are the eigenspaces
of $\alpha$. Clearly, since $\alpha(\phi_1\phi_2)=\alpha(\phi_1)\alpha(\phi_2)$, we have that
\begin{align*}\mathbf{C}\ell^i(V,\rho)\mathbf{C}\ell^j(V,\rho)\subseteq \mathbf{C}\ell^{i+j}(V,\rho)\quad
\text{ for all }i,j\in \mathbb{Z}_2,\end{align*}
that is, $\mathbf{C}\ell(V,\rho)$ is a superalgebra.  It is an observation of Atiyah, Bott and Shapiro \cite{abs}
 that this $\mathbb{Z}_2$-grading plays an important role in the analysis and application of Clifford algebras.
\end{point}

\begin{remark}The two superalgebras $\mathbb{R}_{p, q, r}$ and $\mathbf{C}\ell(V,\rho)$ are naturally isomorphic.
\end{remark}

Denote by $\mathbb{D}_{+}$ (resp.~$\mathbb{D}_{-}$) the real superalgebra generated by odd element $e_{+}$
(resp.~$e_{-}$) subject to the relation $e_{+}^2=1$ (resp.~$e^2_{-}=-1$). For a positive integer $n$, we denote by
$\mathbb{D}_{+}^{n}$ (resp.~$\mathbb{D}_{-}^{n}$) the superalgebra
$\mathbb{D}_{+}\widehat{\otimes}_{\mathbb{R}}\cdots\widehat{\otimes}_{\mathbb{R}}
\mathbb{D}_{+}$  (resp.~$\mathbb{D}_{-}\widehat{\otimes}_{\mathbb{R}}\cdots\widehat{\otimes}_{\mathbb{R}}
\mathbb{D}_{-}$) ($n$ factors) and write  $e_{+}^{i_1\dots i_n}$ (resp.~$e_{-}^{i_1\dots i_n}$) for the homogeneous element
$e_{+}^{i_1}\otimes\dots\otimes
e_{+}^{i_n}\in\mathbb{D}_{+}^{n}$ (resp.~$e_{-}^{i_1}\!\otimes\!\cdots\!\otimes\!
e_{-}^{i_n}\!\in\!\mathbb{D}_{-}^{n}$) where $i_j=0, 1$ for all $1\leq j\leq n$. Note that $\mathbb{D}_{+}^n$
 and $\mathbb{D}_{-}^n$ are exactly $\mathbb{R}_{n,0}$ and $\mathbb{R}_{0,n}$ respectively.

\begin{lemma}\label{dd}If $n=1, 2,3$ then $\mathbb{D}^n_{+}$ and $\mathbb{D}^n_{-}$ are
gr-divisional superalgebras.\end{lemma}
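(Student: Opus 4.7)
The plan is to verify gr-divisionality case-by-case, identifying $\mathbb{D}_\pm^n=\mathbb{R}_{n,0}$ (resp.~$\mathbb{R}_{0,n}$) with the superalgebra on $n$ odd generators $e_1,\dots,e_n$ satisfying $e_ie_j=-e_je_i$ for $i\neq j$ and $e_i^2=\varepsilon$, where $\varepsilon=+1$ for $\mathbb{D}_+^n$ and $\varepsilon=-1$ for $\mathbb{D}_-^n$. To check that a nonzero homogeneous element $x$ is invertible I will exhibit an element $y$ so that $xy$ is a nonzero real scalar. The case $n=1$ is immediate since $\mathbb{D}_\pm$ is spanned by $1$ and $e_\pm$ with $(\lambda e_\pm)^{-1}=\varepsilon\lambda^{-1}e_\pm$. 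For $n=2$, the skew-tensor multiplication rule gives $(e_\pm^{11})^2=-1$ in both signs, so the even subalgebra $\mathbb{R}\oplus\mathbb{R}e_\pm^{11}$ is the field $\mathbb{C}$, while the odd generators $e_\pm^{10}$ and $e_\pm^{01}$ anticommute and each squares to $\varepsilon$, giving $(ae_\pm^{10}+be_\pm^{01})^2=\varepsilon(a^2+b^2)$, a nonzero scalar whenever $(a,b)\neq(0,0)$.

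For $n=3$ the key object is the volume element $\omega:=e_1e_2e_3$. First I verify that $\omega$ is central: a direct sign-count shows $e_i\omega=\omega e_i$ for each $i$, and then $\omega^2=-\varepsilon^3$, so $\omega^2=-1$ in $\mathbb{D}_+^3$ and $\omega^2=+1$ in $\mathbb{D}_-^3$. Next, the even subalgebra, spanned by $\{1,e_1e_2,e_1e_3,e_2e_3\}$, has its three nontrivial generators squaring to $-1$ and satisfying the quaternionic cyclic relations up to a uniform sign, so it is isomorphic to $\mathbb{H}$, a classical division ring. Finally, for a nonzero odd element $v+d\omega$ with $v=ae_1+be_2+ce_3$, the centrality of $\omega$ yields $(v+d\omega)(v-d\omega)=v^2-d^2\omega^2=\varepsilon(a^2+b^2+c^2)+\varepsilon^3 d^2=\varepsilon(a^2+b^2+c^2+d^2)$, a nonzero scalar, giving invertibility.

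The only delicate step in the whole argument is the sign bookkeeping for $n=3$: verifying that $\omega$ is genuinely central (not merely graded-central) and computing $\omega^2$ correctly in each subcase, so that the two contributions $v^2$ and $-d^2\omega^2$ combine into a uniformly signed positive-definite form on $(a,b,c,d)$. Once those signs are pinned down, everything else is a direct application of the anticommutation relations and the skew-tensor multiplication rule.
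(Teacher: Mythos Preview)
Your proof is correct and follows essentially the same case-by-case strategy as the paper: trivial for $n=1$, a sum-of-squares computation for the homogeneous pieces when $n=2$, and for $n=3$ the identification of the even part with $\mathbb{H}$ together with the volume element $\omega=\theta_\pm=e_1e_2e_3$. The only difference is in how the odd part for $n=3$ is dispatched: you compute the explicit quadratic form $(v+d\omega)(v-d\omega)=\varepsilon(a^2+b^2+c^2+d^2)$ using the centrality of $\omega$, whereas the paper invokes the decomposition $\mathbb{D}_\pm^3\cong\mathbb{H}\oplus\mathbb{H}\theta_\pm$ and the invertibility of $\theta_\pm$; your centrality claim for $\omega$ is in fact the correct one (the paper's stated anticommutation $\theta_\pm\mathbf{i}_\pm+\mathbf{i}_\pm\theta_\pm=0$ is a slip, though harmless for the conclusion).
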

\begin{proof}First, it is trivial that $\mathbb{D}_{\pm}$  are gr-divisional superalgebras. Noticing that the even (resp.~odd) elements of $\mathbb{D}_{+}^2$ are of the form $a\otimes 1+be_{+}
\otimes e_{+}$ (resp.~$a\otimes e_{+}+be_{+}\otimes 1$), where $a$, $b\in \mathbb{R}$, and that for
all $a, b\in \mathbb{R}$, \begin{align*}(a\otimes 1+be_{+}\otimes e_{+})
 (a\otimes 1-be_{+}\otimes e_{+})=(a^2+b^2)\otimes 1=
 (a\otimes e_{+}+be_{+}\otimes 1)(a\otimes e_{+}+be_{+}\otimes
 1).\end{align*}
Thus all non-zero homogeneous elements of $\mathbb{D}_{+}^2$ are invertible, that is,
$\mathbb{D}_{+}^2$ is gr-divisional.

Set $\mathbf{1}:=1\widehat{\otimes}1\widehat{\otimes}1$,
$\mathbf{i}_{+}:=e_{+}^{011}$,
$\mathbf{j}_{+}:=e_{+}^{101}$,
$\mathbf{k}_{+}:=e_{+}^{110}$ and
$\theta_{+}:=e_{+}^{111}$. Then it follows directly that $\theta_{+} \mathbf{i}_{+}+\mathbf{i}_{+}\theta_{+}=0$, $\theta_{+} \mathbf{j}_{+}+\mathbf{j}_{+}\theta_{+}=0$ and $\theta_{+} \mathbf{k}_{+}+\mathbf{k}_{+}\theta_{+}=0$. Furthermore, we have $\mathbb{R}\langle\mathbf{1}, \mathbf{i}_{+},
\mathbf{j}_{+}, \mathbf{k}_{+}\rangle\cong \mathbb{H}$ and
$\mathbb{D}_{+}^3\cong\mathbb{H}\oplus\mathbb{H}\theta_{+}$. Note that $\theta_{+}$ is a odd element of $\mathbb{D}^3_{+}$ satisfying $\theta^2_{+}=-1$ and that $\mathbb{H}$ is a gr-divisional superalgebra concentrated on degree zero.  So $\mathbb{D}^3_{+}$ is  gr-divisional.

 Observe that the even (resp.~odd) elements of $\mathbb{D}_{-}^2$ are of the form $a\otimes 1+be_{-}\otimes e_{-}$ (resp.~$a\otimes e_{-}+be_{-}\otimes 1$), where $a$, $b\in \mathbb{R}$, and that for all $a, b\in \mathbb{R}$,
 \begin{align*}(a\otimes 1+be_{-}\otimes e_{-})
 (a\otimes 1-be_{-}\otimes e_{-})=-(a^2+b^2)\otimes 1=
 (a\otimes e_{-}+be_{-}\otimes 1)(a\otimes e_{-}+be_{-}\otimes
 1).\end{align*}
Thus all non-zero homogeneous elements of $\mathbb{D}_{-}^2$ are invertible, that is
$\mathbb{D}_{-}^2$ is gr-divisional.

Set $\mathbf{i}_{-}:=e_{-}^{011}$,
$\mathbf{j}_{-}:=e_{-}^{101}$,
$\mathbf{k}_{-}:=e_{-}^{110}$, and
$\theta_{-}:=e_{-}^{111}$. Then it follows directly that $\theta_{-} \mathbf{i}_{-}+\mathbf{i}_{-}\theta_{-}=0$, $\theta_{-} \mathbf{j}_{-}+\mathbf{j}_{-}\theta_{-}=0$, and $\theta_{-}\mathbf{k}_{-}+\mathbf{k}_{-}\theta_{-}=0$. Moreover, we have $\mathbb{R}\langle\mathbf{1}, \mathbf{i}_{-},
\mathbf{j}_{-}, \mathbf{k}_{-}\rangle\cong \mathbb{H}$ and
$\mathbb{D}_{-}^3\cong\mathbb{H}\oplus\mathbb{H}\theta_{-}$. Note that $\theta_{-}$ is a odd element of $\mathbb{D}^3_{-}$ satisfying $\theta^2_{-}=1$ and that $\mathbb{H}$ is a gr-divisional superalgebra concentrated on degree zero. Thus $\mathbb{D}^3_{-}$ is gr-divisional.
\end{proof}

\begin{lemma}\label{dc} $\mathbb{D}_{+}\widehat{\otimes}_{\mathbb{R}}\mathbb{D}_{-}$
is graded Morita equivalent to the superalgebra
$\mathbb{R}$.\end{lemma}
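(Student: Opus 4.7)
The plan is to exhibit a decomposition $1 = f_{+}+f_{-}$ of the identity of $A := \mathbb{D}_{+}\widehat{\otimes}_{\mathbb{R}}\mathbb{D}_{-}$ into two orthogonal even idempotents that are $\sigma$-equivalent and primitive; by Proposition-Definition~\ref{psbasicdef} and the remark following it, $A$ will then be $\sigma$-graded (i.e.~graded Morita) equivalent to $\End_{A}^{\sigma}(Af_{+})\cong f_{+}Af_{+}$, which will turn out to be $\mathbb{R}$.

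Writing $a := e_{+}\otimes 1$, $b := 1\otimes e_{-}$ and $c := e_{+}\otimes e_{-}$, the skew tensor product formula first yields the multiplication table
\begin{align*}
|a|=|b|=1,\ |c|=0,\ a^{2}=1,\ b^{2}=-1,\ c^{2}=1,\ ab=-ba=c,\ ac=-ca=b,\ bc=-cb=a.
\end{align*}
Since $c^{2}=1$, the elements $f_{\pm}:=\tfrac{1}{2}(1\pm c)$ are orthogonal even idempotents with $f_{+}+f_{-}=1$.

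The main step is to apply Lemma~\ref{lequivalent} to the odd elements
\begin{align*}
x := \tfrac{1}{2}(a-b) \in f_{+}Af_{-},\qquad y := \tfrac{1}{2}(a+b) \in f_{-}Af_{+},
\end{align*}
where the containments follow from $f_{+}a=af_{-}=\tfrac{1}{2}(a-b)$ and $f_{-}a=af_{+}=\tfrac{1}{2}(a+b)$. A direct computation using $a^{2}=1$, $b^{2}=-1$ and $ab=-ba=c$ yields $xy=f_{+}$ and $yx=f_{-}$, so $f_{+}$ and $f_{-}$ are $\sigma$-equivalent. Moreover, the anticommutation relations $ac+ca=0=bc+cb$ force $f_{+}af_{+}=f_{+}bf_{+}=0$, while $cf_{+}=f_{+}$ gives $f_{+}cf_{+}=f_{+}$; hence $f_{+}Af_{+}=\mathbb{R}f_{+}\cong \mathbb{R}$, which in particular shows that $f_{+}$ is primitive.

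Thus $\{f_{+}\}$ is a complete set of non-$\sigma$-equivalent primitive idempotents of $A$, and the remark following Proposition-Definition~\ref{psbasicdef} yields $A\thickapprox_{\sigma}\End_{A}^{\sigma}(Af_{+})\cong f_{+}Af_{+}\cong \mathbb{R}$. The only substantive obstacle is the careful bookkeeping of signs from the skew tensor product, in particular the fact that $c=e_{+}\otimes e_{-}$ is \emph{even} and squares to $+1$, so that $f_{\pm}$ are genuinely degree-zero idempotents and the products $xy$, $yx$ land in the span of $1$ and $c$; once these relations are in hand, Lemma~\ref{lequivalent} and Proposition-Definition~\ref{psbasicdef} do all the work.
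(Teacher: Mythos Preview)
Your proof is correct and follows essentially the same approach as the paper: the idempotents $f_{\pm}=\tfrac{1}{2}(1\pm e_{+}\otimes e_{-})$ and the odd elements $x=\tfrac{1}{2}(e_{+}\otimes 1-1\otimes e_{-})$, $y=\tfrac{1}{2}(e_{+}\otimes 1+1\otimes e_{-})$ used to apply Lemma~\ref{lequivalent} are exactly those in the paper. The only cosmetic differences are your shorthand $a,b,c$ and your direct verification that $f_{+}Af_{+}=\mathbb{R}f_{+}$ (hence $f_{+}$ primitive), whereas the paper infers primitivity from $\dim_{\mathbb{R}}A_{0}=2$.
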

\begin{proof}Set $A=\mathbb{D}_{+}\widehat{\otimes}_{\mathbb{R}}\mathbb{D}_{-}$. By Proposition-Definition~\ref{psbasicdef},
it suffices to determine the non-$\sigma$-equivalent idempotents of $A$. Let
$f_{\pm}=\frac{1}{2}(1\otimes 1\pm e_{+}\otimes e_{-})$. Then $f_{\pm}^2=f_{\pm}$, $f_{+}+f_{-}=1\otimes 1$ and $f_{+}f_{-}=0=f_{-}f_{+}$, which implies that
$\{f_{+}, f_{-}\}$ is the complete set of orthogonal primitive
idempotents of $A$ since $\dim_{\mathbb{R}}A_{0}=2$. It is straightforward to show that
\begin{align*}f_{+}Af_{-}=\{r(e_{+}\otimes1-1\otimes e_{-})\mid r\in\mathbb{R}\}\quad\text{and}\quad
f_{-}Af_{+}=\{r(e_{+}\otimes1+1\otimes e_{-})\mid r\in\mathbb{R}\}.
 \end{align*}
Let $x=\frac{1}{2}(e_{+}\otimes1-1\otimes e_{-})$ and $y=\frac{1}{2}(e_{+}\otimes1+1\otimes e_{-})$. Then  $xf_{-}y=f_{+}$ and $yf_{+}x=f_{-}$.
Applying Lemma~\ref{lequivalent},  $f_+$ and $f_-$ are
$\sigma$-equivalent orthogonal primitive idempotents of $A$. As a consequence, Proposition-Definition~\ref{psbasicdef} implies that
$A$ is graded Morita equivalent to
$f_{+}Af_{+}=\mathbb{R}f_{+}\cong \mathbb{R}$.
\end{proof}

\begin{lemma}\label{dddd}$\mathbb{D}_{\pm}^4$ are graded Morita
equivalent to the superalgebra $\mathbb{H}$.
\end{lemma}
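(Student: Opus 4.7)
My plan is to mirror the strategy of Lemma~\ref{dc}: I will exhibit a pair of primitive orthogonal idempotents $f_\pm \in A := \mathbb{D}_\pm^4$, show that their corner algebras $f_\pm A f_\pm$ are each isomorphic to $\mathbb{H}$ (concentrated in degree zero), and verify via Lemma~\ref{lequivalent} that $f_+$ and $f_-$ are $\sigma$-equivalent. Proposition-Definition~\ref{psbasicdef} together with the subsequent remark will then yield $\mathbb{D}_\pm^4 \thickapprox_\sigma \mathbb{H}$. In what follows I write $e_1,e_2,e_3,e_4$ for the four odd Clifford generators of $A$, so that $e_ie_j=-e_je_i$ for $i\ne j$ and $e_i^2=\varepsilon$, with $\varepsilon=+1$ in the $\mathbb{D}_+^4$ case and $\varepsilon=-1$ in the $\mathbb{D}_-^4$ case.

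The key element is the volume form $\omega := e_1 e_2 e_3 e_4 \in A_0$. Counting the $\binom{4}{2}=6$ transpositions needed to rearrange $\omega^2$ gives $\omega^2 = (-1)^6 \varepsilon^4 = 1$; a similar sign count shows $\omega e_i = -e_i \omega$ for every $i$, so $\omega$ commutes with $A_0$ and anticommutes with $A_1$. Put $f_\pm := \tfrac12(1 \pm \omega)$: these are orthogonal even idempotents with $f_++f_-=1$. For any odd $b$, the identity $(1+\omega)b = b(1-\omega)$ forces $f_+ b f_+ = \tfrac14 b(1-\omega^2) = 0$, and likewise for $f_-$, so the corners $f_\pm A f_\pm = f_\pm A_0$ are concentrated in degree zero. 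Since $\omega$ is central and involutory in the $8$-dimensional subalgebra $A_0$, it splits $A_0$ into two $4$-dimensional eigenspaces, and so each corner has real dimension~$4$.

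To identify $f_+ A f_+$ with $\mathbb{H}$ I will exhibit three imaginary units: for appropriate signs $\eta_1,\eta_2,\eta_3 \in \{\pm 1\}$ (depending on $\varepsilon$), the elements
\[\widetilde I = \tfrac12(e_1e_2 + \eta_1 e_3e_4)f_+, \quad \widetilde J = \tfrac12(e_1e_3 + \eta_2 e_2e_4)f_+, \quad \widetilde K = \tfrac12(e_1e_4 + \eta_3 e_2e_3)f_+\]
should satisfy $\widetilde I^{\,2} = \widetilde J^{\,2} = \widetilde K^{\,2} = -f_+$ and $\widetilde I\widetilde J = \pm\widetilde K$; this follows from the routine identities $(e_ie_j)^2 = -1$ and $e_ie_je_ke_l = \pm\omega$ for the various permutations of $1,2,3,4$. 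The resulting four-dimensional real division algebra is then $\mathbb{H}$ (invoking $\mathbb{H}^{\mathrm{op}} \cong \mathbb{H}$ if the chirality comes out reversed). In particular $f_+ A f_+$ is gr-divisional, hence gr-local, so $f_+$ is primitive; the same argument covers $f_-$.

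Finally, to obtain $f_+ \sim_\sigma f_-$, I set $x := \tfrac12 e_1(1-\omega)$ and $y := \tfrac{\varepsilon}{2} e_1(1+\omega)$. Both are odd homogeneous, and the anticommutation $(1-\omega)e_1 = e_1(1+\omega)$ yields $x \in f_+ A f_-$ and $y \in f_- A f_+$ together with
\[xf_- y = \tfrac{\varepsilon}{4}\, e_1^2 (1+\omega)^2 = \tfrac{\varepsilon^2}{2}(1+\omega) = f_+, \qquad y f_+ x = f_-.\]
Lemma~\ref{lequivalent} then delivers the $\sigma$-equivalence, and the proof concludes as in Lemma~\ref{dc}. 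The only substantive obstacle is the sign bookkeeping in the quaternion check: the correct choice of $(\eta_1,\eta_2,\eta_3)$ depends on whether $\varepsilon=+1$ or $-1$, but in both cases $f_+ A f_+$ is unambiguously isomorphic to $\mathbb{H}$.
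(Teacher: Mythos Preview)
Your proof is correct, but it takes a genuinely different route from the paper. The paper does \emph{not} work directly inside $\mathbb{D}_\pm^4$; instead it recycles the structural observation from the proof of Lemma~\ref{dd} that $\mathbb{D}_+^3 \cong \mathbb{H}\oplus\mathbb{H}\theta_+$ with $\theta_+$ odd and $\theta_+^2=-1$, so that $e_-\mapsto\theta_+$ gives an isomorphism $\mathbb{D}_-\widehat{\otimes}_{\mathbb{R}}\mathbb{H}\cong\mathbb{D}_+^3$. Then $\mathbb{D}_+^4\cong\mathbb{D}_+\widehat{\otimes}_{\mathbb{R}}\mathbb{D}_-\widehat{\otimes}_{\mathbb{R}}\mathbb{H}$, and Lemmas~\ref{pskewtensor} and~\ref{dc} immediately give $\mathbb{D}_+^4\thickapprox_\sigma\mathbb{R}\widehat{\otimes}_{\mathbb{R}}\mathbb{H}=\mathbb{H}$; the case of $\mathbb{D}_-^4$ is symmetric via $\mathbb{D}_+\widehat{\otimes}_{\mathbb{R}}\mathbb{H}\cong\mathbb{D}_-^3$. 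So the paper's argument is a two-line reduction to earlier lemmas, whereas you rerun the Lemma~\ref{dc} machinery from scratch with the volume element $\omega=e_1e_2e_3e_4$. Your approach buys explicitness and independence from Lemma~\ref{pskewtensor}; the paper's buys brevity and makes the tensor-factorisation structure transparent, which is what drives the mod~$8$ periodicity in Theorem~\ref{tclifford}.

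One small tidy-up: your half-sum definitions $\widetilde I=\tfrac12(e_1e_2+\eta_1e_3e_4)f_+$ etc.\ are more elaborate than necessary. Since $\omega f_+=f_+$ and $(e_1e_2)^{-1}\omega=-e_3e_4$, one already has $e_3e_4f_+=-e_1e_2f_+$ in the corner (independently of $\varepsilon$), so the three generators can simply be taken as $e_1e_2f_+$, $e_1e_3f_+$, $e_1e_4f_+$; each squares to $-f_+$ and their products close up to sign, with the residual chirality absorbed exactly as you say by $\mathbb{H}^{\mathrm{op}}\cong\mathbb{H}$.
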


\begin{proof}By the proof of Lemma~\ref{dd}, the natural graded map $e_{-}\mapsto \theta_{+}$ gives an isomorphism between $\mathbb{D}_{-}{\widehat{\otimes}}_{\mathbb{R}}\mathbb{H}$ and $\mathbb{D}_{+}^3$.
 Therefore, using Lemmas~\ref{pskewtensor} and \ref{dc}, $\mathbb{D}_{+}^4=\mathbb{D}_{+}\widehat{\otimes}_{\mathbb{R}}
 \mathbb{D}_{+}^3\cong \mathbb{D}_{+}\widehat{\otimes}_{\mathbb{R}}\mathbb{D}_{-}\widehat{\otimes}_{\mathbb{R}}\mathbb{H}$
 is graded Morita equivalent to  the superalgebra $\mathbb{H}$. Similarly,
 noticing that $\mathbb{D}_{+}\widehat{\otimes}\mathbb{H}\cong \mathbb{D}_{-}^3$,
 we can show that $\mathbb{D}_{-}^4$ is graded Morita equivalent to the superalgebra $\mathbb{H}$.\end{proof}

\begin{remark}\begin{enumerate}\item The Lemmas~\ref{dc} and \ref{dddd} show that the skew tensor product
of $\sigma$-graded basic superalgebras may not be  $\sigma$-graded basic. In particular,
the skew tensor product of gr-divisional superalgebras may not be gr-divisional.
\item Lemmas~\ref{rsp}, \ref{dc} and \ref{dddd} imply that $\mathbb{D}_{+}\otimes\mathbb{D}_{-}$ and $\mathbb{D}_{\pm}^4$ are also $\pi$-graded equivalent to $\mathbb{R}$ and $\mathbb{H}$ respectively since $\widehat{\mathbb{D}_{\pm}}=\mathbb{D}_{\mp}$.
\end{enumerate}
\end{remark}
Now we can obtain the graded Morita classifications for all real Clifford superalgebras.

\begin{theorem}\label{tclifford} Assume that $p$, $q$ and $r$ are non-negative integers. Then \begin{enumerate}\item
$\mathbb{R}_{p, q, r}\,\approx_{\sigma}\left\{\aligned
&\bigwedge\!_{\mathbb{R}}(r), &&\text{if }p-q\equiv0 (\mathrm{mod\,} 8);\\
&\mathbb{H}\widehat{\otimes}_{\mathbb{R}}\bigwedge\!_{\mathbb{R}}(r),&&\text{if } p-q\equiv 4 (\mathrm{mod\,}8);\\
&\mathbb{D}_{+}^i\widehat{\otimes}_{\mathbb{R}}\bigwedge\!_{\mathbb{R}}(r), &&\text{if }p-q\equiv  4-i(\mathrm{mod\,}8)\text{ for } 1\leq i\leq 3;\\
&\mathbb{D}_-^i\widehat{\otimes}_{\mathbb{R}}\bigwedge\!_{\mathbb{R}}(r),&&\text{if }p-q\equiv 4+i (\mathrm{mod\,}8)
\text{ for } 1\le i \leq 3.
\endaligned\right.$

\item $\mathbb{R}_{p, q, r}\,\approx_{\pi}\left\{\aligned
&\bigwedge\!_{\mathbb{R}}(r), &&\text{if }p-q\equiv0 (\mathrm{mod\,} 8);\\
&\mathbb{H}\widehat{\otimes}_{\mathbb{R}}\bigwedge\!_{\mathbb{R}}(r),&&\text{if } p-q\equiv 4 (\mathrm{mod\,}8);\\
&\mathbb{D}_{+}^i\widehat{\otimes}_{\mathbb{R}}\bigwedge\!_{\mathbb{R}}(r), &&\text{if }p-q\equiv  4+i(\mathrm{mod\,}8)\text{ for } 1\leq i\leq 3;\\
&\mathbb{D}_-^i\widehat{\otimes}_{\mathbb{R}}\bigwedge\!_{\mathbb{R}}(r),&&\text{if }p-q\equiv 4-i (\mathrm{mod\,}8)
\text{ for } 1\le i \leq 3.
\endaligned\right.$
\end{enumerate}
$($Note that these superalgebras are graded basic superalgebras.$)$
\end{theorem}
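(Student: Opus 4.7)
The plan is to reduce $\mathbb{R}_{p,q,r}$ to a canonical form in two stages. First, using the isomorphism $\mathbb{R}_{p,q,r}\cong\mathbb{R}_{p,q}\widehat{\otimes}_{\mathbb{R}}\bigwedge_{\mathbb{R}}(r)$ and the fact that $\bigwedge_{\mathbb{R}}(r)$ is already graded basic, Lemma~\ref{pskewtensor} lets me focus on the graded Morita class of $\mathbb{R}_{p,q}\cong\mathbb{D}_+^p\widehat{\otimes}_{\mathbb{R}}\mathbb{D}_-^q$. Iterating Lemma~\ref{dc} ($\mathbb{D}_+\widehat{\otimes}_{\mathbb{R}}\mathbb{D}_-\approx_\sigma\mathbb{R}$) together with Lemma~\ref{pskewtensor} cancels $\min(p,q)$ pairs, reducing to $\mathbb{D}_+^{p-q}$ if $p\ge q$ and to $\mathbb{D}_-^{q-p}$ if $q\ge p$. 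Thus the problem becomes: identify the graded basic form of $\mathbb{D}_\pm^n$ for each $n\ge 0$.

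For $1\le n\le 3$, Lemma~\ref{dd} shows $\mathbb{D}_\pm^n$ is gr-divisional, hence its own graded basic form. For $n=4$, Lemma~\ref{dddd} identifies the class with $\mathbb{H}$. For $5\le n\le 7$, I would invoke the superalgebra isomorphisms $\mathbb{D}_-\widehat{\otimes}_{\mathbb{R}}\mathbb{H}\cong\mathbb{D}_+^3$ and $\mathbb{D}_+\widehat{\otimes}_{\mathbb{R}}\mathbb{H}\cong\mathbb{D}_-^3$ extracted from the proof of Lemma~\ref{dddd}, combined with Lemmas~\ref{dddd} and~\ref{pskewtensor}, to rewrite $\mathbb{D}_+^{4+j}\approx_\sigma\mathbb{H}\widehat{\otimes}_{\mathbb{R}}\mathbb{D}_+^{j}\cong\mathbb{D}_-^3\widehat{\otimes}_{\mathbb{R}}\mathbb{D}_+^{j-1}$, then cancel $\mathbb{D}_-\widehat{\otimes}_{\mathbb{R}}\mathbb{D}_+$ pairs using Lemma~\ref{dc}. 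This brings the class into the correct low-index $\mathbb{D}_-$ form for $j=1,2,3$, and into $\mathbb{R}$ at $j=4$ because $\mathbb{D}_\pm^8\approx_\sigma\mathbb{H}\widehat{\otimes}_{\mathbb{R}}\mathbb{H}$ collapses: since $\mathbb{H}$ sits in degree zero the skew tensor equals the ordinary one, giving $M_4(\mathbb{R})$, which is $\sigma$-graded equivalent to $\mathbb{R}$ by classical matrix Morita theory. This furnishes the mod-$8$ periodicity, and a case split on $p-q\pmod 8$ completes part~(i).

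Part~(ii) follows from Lemma~\ref{rsp}: an equivalence $\mathbb{R}_{p,q,r}\approx_\sigma B$ supplies $\mathbb{R}_{p,q,r}\approx_\pi\widehat{B}$. For each target $B$ appearing in~(i), a direct check on generators gives $\widehat{\bigwedge_{\mathbb{R}}(r)}\cong\bigwedge_{\mathbb{R}}(r)$, $\widehat{\mathbb{H}}\cong\mathbb{H}$ and $\widehat{\mathbb{D}_\pm^i}\cong\mathbb{D}_\mp^i$ (the relation $e_\pm^2=\pm 1$ acquires a sign from the hatting rule $\hat{a}\hat{b}=(-1)^{|a||b|}\widehat{ab}$, exchanging $+$ and $-$); and since at least one factor in each target is purely even the hat passes through $\widehat{\otimes}_{\mathbb{R}}$ unchanged, so $\widehat{B}$ is obtained from $B$ by swapping $\mathbb{D}_+\leftrightarrow\mathbb{D}_-$, matching the relabeling between~(i) and~(ii). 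The main obstacle will be the bookkeeping in the case analysis for $p-q\equiv 5,6,7\pmod 8$ of part~(i), where the reduction identities must be composed in the right order so that the final index on the $\mathbb{D}_-$-side emerges correctly.
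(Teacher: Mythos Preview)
Your plan matches the paper's proof almost step for step: reduce to $\mathbb{R}_{p,q}\cong\mathbb{D}_+^p\widehat{\otimes}_{\mathbb{R}}\mathbb{D}_-^q$, cancel pairs via Lemma~\ref{dc}, handle $\mathbb{D}_\pm^n$ for $0\le n\le 7$ using Lemmas~\ref{dd} and~\ref{dddd} together with $\mathbb{D}_\pm\widehat{\otimes}_{\mathbb{R}}\mathbb{H}\cong\mathbb{D}_\mp^3$ and $\mathbb{H}\widehat{\otimes}_{\mathbb{R}}\mathbb{H}\cong M_4(\mathbb{R})$, and finally tensor back with $\bigwedge_{\mathbb{R}}(r)$ through Lemma~\ref{pskewtensor}. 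For part~(ii) you also invoke Lemma~\ref{rsp} exactly as the paper does.

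There is one slip in your part~(ii) argument. You assert that ``at least one factor in each target is purely even'' so that the hat passes through the skew tensor product. This is false for $\mathbb{D}_\pm^i\widehat{\otimes}_{\mathbb{R}}\bigwedge_{\mathbb{R}}(r)$ with $i,r\ge 1$: neither factor is concentrated in degree zero, and in general $\widehat{A\widehat{\otimes}_K B}\not\cong\widehat{A}\widehat{\otimes}_K\widehat{B}$. Your conclusion is nevertheless correct, because these targets are precisely the Clifford superalgebras $\mathbb{R}_{i,0,r}$ and $\mathbb{R}_{0,i,r}$, and a direct check on generators (each odd $e_j$ satisfies $\hat{e}_j^2=-\widehat{e_j^{\,2}}$) gives the single identity $\widehat{\mathbb{R}_{p,q,r}}\cong\mathbb{R}_{q,p,r}$, which immediately yields the $\mathbb{D}_+\leftrightarrow\mathbb{D}_-$ swap you want. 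This is how the paper handles it; replacing your tensor-factor argument with this observation closes the gap.
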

\begin{proof}(i) Note that $\mathbb{D}_+\cong \mathbb{R}_{1, 0}$ and $\mathbb{D}_{-}\cong \mathbb{R}_{1, 0}$.
Using \cite[Proposition~1.6]{abs}, Lemmas~\ref{pskewtensor} and \ref{dc},
$$\mathbb{R}_{p,q}=\mathbb{D}_{+}^p\widehat{\otimes}_{\mathbb{R}}\mathbb{D}_{-}^q
\,{\stackrel{\sigma}\approx}\left\{\aligned\mathbb{D}^{p-q}_+,&\, \text{\quad\quad if } p\ge q;\\
\mathbb{D}_-^{q-p},&\,\text{\quad\quad otherwise}.\endaligned\right.
$$
\indent Now by \cite[\S 5.6]{porteous95}, $\mathbb{H}\widehat{\otimes}_{\mathbb{R}}\mathbb{H}$ is isomorphic to the $4\times 4$ real matrix superalgebra concentrated
on degree zero, which is (graded) Morita equivalent to  $\mathbb{R}$. Consequently, by Lemma~\ref{dddd}, $\mathbb{D}_{\pm}^8$ is graded Morita equivalent to $\mathbb{R}$.  Therefore Lemmas~\ref{pskewtensor} and
\ref{dddd} imply that
\begin{enumerate}\item if $p\geq q$ then $\mathbb{D}^{p-q}_+\,\approx_{\sigma}\left\{\aligned\mathbb{D}_{+}^i,\quad&\,\text {\quad\quad if } p-q\equiv
i(\text{mod\,}8), i=0, 1, 2, 3,\\\mathbb{D}_{+}^i\widehat{\otimes}_{\mathbb{R}}\mathbb{H},&\,
\text {\quad\quad if } p-q\equiv 4+i(\text{mod\,}8), i=0, 1, 2, 3;
\endaligned\right. $
\vspace{1\jot}
\item if $p<q$ then $\mathbb{D}^{q-p}_-\,\approx_{\sigma}\left\{\aligned\mathbb{D}_{-}^i,\quad&\,\text {\quad\quad if } q-p\equiv
i(\text{mod\,}8), i=0, 1, 2, 3,\\\mathbb{D}_{-}^i\widehat{\otimes}_{\mathbb{R}}\mathbb{H},&\,
\text {\quad\quad if } q-p\equiv 4+i(\mathrm{mod\,}8), i=0, 1, 2, 3;
\endaligned\right. $\end{enumerate}
By the proof of Lemma~\ref{dddd}, $\mathbb{D}_{+}\widehat{\otimes}_{\mathbb{R}}\mathbb{H}\cong
\mathbb{D}_{-}^3$ and $\mathbb{D}_{-}\widehat{\otimes}_{\mathbb{R}}\mathbb{H}\cong
\mathbb{D}_{+}^3$. Applying Lemmas~\ref{pskewtensor} and \ref{dc} again, we have  $\mathbb{D}_{+}^{i}\widehat{\otimes}_{\mathbb{R}}\mathbb{H}\approx_{\sigma}\mathbb{D}_{-}^{4-i}$ and  $\mathbb{D}_{-}^{i}\widehat{\otimes}_{\mathbb{R}}\mathbb{H}\approx_{\sigma}\mathbb{D}_{+}^{4-i}$ for $1\le i\le 3$.
Noticing that $q-p\equiv 4+i (\mathrm{mod\,}8)$ if and only if $p-q\equiv 4-i (\mathrm{mod\,}8)$.
As a consequence we complete the proof of (i) by \cite[Proposition~1.6]{abs}.

Note that $\widehat{\mathbb{R}_{p,q,r}}\cong \mathbb{R}_{q,p,r}$. Hence (ii) is follows directly by Lemma~\ref{rsp} and (i). \end{proof}

\begin{remark}Note that the $\mathbb{Z}_2$-graded Hochschild and cyclic (co)homology $H_*(A)$ of a finite dimensional superalgebra $A$ is graded Morita equivalent invariant \cite{zhao}. Theorem~\ref{tclifford} implies that $$H_{*}(\mathbb{R}_{p,q})=\left\{\aligned
&0,&&\text{if }*\geq 1;\\
&\mathbb{R},&&\text{if } *=0,
\endaligned\right.$$
since $\mathbb{R}$, $\mathbb{H}$ and  $\mathbb{D}_{\pm}^i$ for all $1\leq i\leq 3$ are separable superalgebras (see
\cite[Chap.~IV-V]{bass}, which gives a slight generalization of \cite[Proposition 1]{kassel} since the quadratic form  $\rho(x)=\sum_{i=1}^px_i^2-\sum_{j=1}^qx_{p+j}^2$ on $\mathbb{R}^{p+q}$ is degenerate.
\end{remark}

\begin{point}{}* Let $p$ and $q$ be positive integers. We denote by
$\mathbb{C}^{p, q}$ the  complex quadratic space $\mathbb{C}^{p+q}$
with a quadratic form $x_1^2+\cdots+ x_{p}^2$. The
Clifford superalgebra $\mathbb{C}_{p, q}$  on $\mathbb{C}^{p, q}$ is the complex unitary
superalgebra generated by odd generators $e_1, \cdots, e_{p+q}$ subject to the following relations:
\begin{align*}& e_ie_j + e_je_i = 0 &&\text{for } 1\le i\neq  j\le p+q;\\
& e_i^2=1&&\text{for }1\leq i\leq p;\\
&e_{p+j}^2=0&&\text{for } 1\leq j\leq q.\end{align*}
Note that the Clifford superalgebra $\mathbb{C}_{0, q}$ is the complex
Grassman superalgebra $\bigwedge_{\mathbb{C}}(q)$ generated by odd generators $e_{p+1}$, $\dots$, $e_{p+q}$.
Observe that the orthogonal primitive idempotent of $\bigwedge_{\mathbb{C}}(q)$ is the unity 1,
which implies that $\bigwedge_{\mathbb{C}}(q)$ are graded basic superalgebras for all positive integer $q\ge 1$.

We denote by $D=\mathbb{C}\oplus \mathbb{C}\varepsilon$ the complex
superalgebra generated by odd generators $\varepsilon$ subject to the relation
$\varepsilon^2=1$. For positive integer $n$, let $D^n$ be the superalgebra
$D\widehat{\otimes}_{\mathbb{C}}\cdots\widehat{\otimes}_{\mathbb{C}}
D$ ($n$ factors). Then $D^p\cong \mathbb{C}_{p,0}$ and $\mathbb{C}_{p, q}\cong
D^p\widehat{\otimes}_{\mathbb{C}}\bigwedge_{\mathbb{C}}(q)$  for all positive integers $p$ and $q$, see  \cite[Proposition~1.6]{abs}.\end{point}

\begin{lemma}\label{DD}
$D\widehat{\otimes}_{\mathbb{C}}D$ is graded Morita equivalent to
the superalgebras $\mathbb{C}$.
\end{lemma}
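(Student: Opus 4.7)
My plan is to mimic the proof of Lemma \ref{dc} almost verbatim, adapting the formulas to the complex setting. Set $A = D\widehat{\otimes}_{\mathbb{C}} D$. By Proposition-Definition \ref{psbasicdef} and the remark following it, it suffices to exhibit a complete set of orthogonal primitive idempotents of $A$ and show they are all $\sigma$-equivalent; then $A$ is graded Morita equivalent to the corner $f A f$ for a single idempotent $f$, which I expect to be $\mathbb{C}$.

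First I would observe that $\dim_{\mathbb{C}} A_0 = 2$, with basis $\{1\otimes 1,\ \varepsilon\otimes\varepsilon\}$. The key new ingredient compared to Lemma \ref{dc} is that a direct computation in the skew tensor product gives
\begin{equation*}
(\varepsilon\otimes\varepsilon)^2 = (-1)^{|\varepsilon||\varepsilon|}\varepsilon^2\otimes\varepsilon^2 = -1,
\end{equation*}
so one must use a square root of $-1$ (available because the base field is $\mathbb{C}$) rather than just a sign. I propose the idempotents
\begin{equation*}
f_{\pm} = \tfrac{1}{2}\bigl(1\otimes 1 \pm i\,\varepsilon\otimes\varepsilon\bigr).
\end{equation*}
A routine check confirms $f_\pm^2 = f_\pm$, $f_+ + f_- = 1\otimes 1$, and $f_+ f_- = f_- f_+ = 0$, so $\{f_+, f_-\}$ is a complete set of orthogonal primitive idempotents since $\dim_{\mathbb{C}} A_0 = 2$.

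Next I would apply Lemma \ref{lequivalent}. A short calculation identifies $x = \tfrac{1}{2}(1\otimes\varepsilon + i\,\varepsilon\otimes 1) \in f_+ A f_-$ and $y = \tfrac{1}{2}(1\otimes\varepsilon - i\,\varepsilon\otimes 1) \in f_- A f_+$, both odd. The main bookkeeping task is then to verify $x f_- y = f_+$ and $y f_+ x = f_-$; expanding the products using the sign rule $(a_1\otimes b_1)(a_2\otimes b_2) = (-1)^{|b_1||a_2|} a_1 a_2\otimes b_1 b_2$, together with $\varepsilon^2 = 1$, one finds $xy = f_+$ and $yx = f_-$, from which the desired identities follow. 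By Lemma \ref{lequivalent}, $f_+$ and $f_-$ are $\sigma$-equivalent.

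Finally, by the remark following Proposition-Definition \ref{psbasicdef}, $A$ is $\sigma$-graded equivalent to $\mathrm{End}^{\sigma}_A(A f_+) \cong f_+ A f_+$. Since $f_+(\varepsilon\otimes\varepsilon) = -i f_+$ (immediate from $i\varepsilon\otimes\varepsilon = 2f_+ - 1\otimes 1$), the even part of $f_+ A f_+$ reduces to $\mathbb{C}f_+$; and for any odd element $m\in A$ the product $f_+ m$ lies in $f_+ A f_-$, so $f_+ m f_+ = 0$. Hence $f_+ A f_+ = \mathbb{C} f_+ \cong \mathbb{C}$, finishing the proof. The only genuine obstacle is the careful coefficient bookkeeping in the skew tensor product; conceptually the argument is the direct complex analog of Lemma \ref{dc}, with the replacement $\pm 1 \leadsto \pm i$ forced by $(\varepsilon\otimes\varepsilon)^2 = -1$.
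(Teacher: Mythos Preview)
Your proposal is correct and follows essentially the same approach as the paper: the paper also sets $\varepsilon_{\pm}=\tfrac{1}{2}(1\otimes 1\pm i\,\varepsilon\otimes\varepsilon)$, shows these form a complete set of primitive orthogonal idempotents, exhibits odd elements $x\in\varepsilon_+A\varepsilon_-$ and $y\in\varepsilon_-A\varepsilon_+$ witnessing $\sigma$-equivalence via Lemma~\ref{lequivalent}, and concludes $A\approx_\sigma\varepsilon_+A\varepsilon_+\cong\mathbb{C}$. Your explicit choices of $x$ and $y$ differ from the paper's but are equally valid (and arguably tidier), and your extra line justifying $(f_+Af_+)_1=0$ is a nice touch the paper leaves implicit.
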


\begin{proof}Set $A=D\widehat{\otimes}_{\mathbb{C}}D$
Let $\varepsilon_{\pm}=\frac{1}{2}(1\otimes 1\pm
\mathbbm{i}\varepsilon\otimes\varepsilon)$, where $\mathbbm{i}$ is the
imaginary unit of $\mathbb{C}$. Then $\varepsilon_{\pm}^2=\varepsilon_{\pm}$, $\varepsilon_{+}\varepsilon_{-}=0=\varepsilon_{-}\varepsilon_{+}$ and $\varepsilon_{+}+\varepsilon_{-}=1\otimes 1$, which implies that  $\{\varepsilon_{+}, \varepsilon_{-}\}$ is the
complete set of orthogonal primitive idempotents of $D^2$ since $\dim_{\mathbb{C}}A_{0}=2$. It is straightforward to show that \begin{align*}\varepsilon_{+}A\varepsilon_{-}=\{c(\mathbbm{i}\otimes\varepsilon-1\otimes \varepsilon)\mid c\in\mathbb{C}\}\quad\text{and}\quad
\varepsilon_{-}A\varepsilon_{+}=\{c(\mathbbm{i}\otimes \varepsilon+\varepsilon\otimes1)\mid c\in\mathbb{C}\}.
 \end{align*}
Let $x=\frac{1}{2}(\mathbbm{i}\otimes\varepsilon-1\otimes \varepsilon)$ and $y=\frac{1}{2}(\varepsilon\otimes1+\mathbbm{i}\otimes \varepsilon)$. Then  $x\varepsilon_{-}y=\varepsilon_{+}$ and $y\varepsilon_{+}x=\varepsilon_{-}$.
Applying Lemma~\ref{lequivalent},  $\varepsilon_+$ and $\varepsilon_-$ are
$\sigma$-equivalent orthogonal primitive idempotents of $A$. Thus Proposition-Definition~\ref{psbasicdef} implies
 that $A$ is graded Morita equivalent to $\varepsilon_{+}A\varepsilon_{+}=\mathbb{C}\varepsilon_{+}\cong\mathbb{C}$.
\end{proof}

Now we yield the graded Morita equivalent classification for all complex Clifford superalgebras.

\begin{theorem}\label{thm-complex}Assume that $p$ and $q$ are non-negative integers. Then $\mathbb{C}_{p,q}$ is graded Morita equivalent to graded basic superalgebra $\mathbb{C}_{p(\mathrm{mod\,}2),q}$.
\end{theorem}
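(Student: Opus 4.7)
The plan is to combine the isomorphism $\mathbb{C}_{p,q}\cong D^p\widehat{\otimes}_{\mathbb{C}}\bigwedge_{\mathbb{C}}(q)$ recorded in the preceding paragraph with Lemma~\ref{DD} and the multiplicativity of $\approx_\sigma$ under skew tensor products (Lemma~\ref{pskewtensor}). The key move is the same pairing trick already used in the proofs of Lemmas~\ref{dc} and \ref{dddd}: group the $p$ factors of $D^p$ into pairs and absorb each pair $D\widehat{\otimes}_{\mathbb{C}}D$ into the base field $\mathbb{C}$.

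Concretely, write $p=2k+\epsilon$ with $\epsilon\in\{0,1\}$. Associativity of the skew tensor product gives
\[
D^p\;\cong\;(D\widehat{\otimes}_{\mathbb{C}}D)^{\widehat{\otimes}k}\widehat{\otimes}_{\mathbb{C}}D^{\epsilon}.
\]
Applying Lemma~\ref{DD} to each pair and invoking Lemma~\ref{pskewtensor} $k$ times (together with the trivial fact that $\mathbb{C}$ is graded Morita equivalent to itself) shows that $D^p\approx_\sigma \mathbb{C}^{\widehat{\otimes}k}\widehat{\otimes}_{\mathbb{C}}D^{\epsilon}\cong D^{\epsilon}$. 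One further application of Lemma~\ref{pskewtensor}, tensoring both sides with $\bigwedge_{\mathbb{C}}(q)$, then yields
\[
\mathbb{C}_{p,q}\;\cong\;D^p\widehat{\otimes}_{\mathbb{C}}\bigwedge_{\mathbb{C}}(q)\;\approx_\sigma\;D^{\epsilon}\widehat{\otimes}_{\mathbb{C}}\bigwedge_{\mathbb{C}}(q)\;\cong\;\mathbb{C}_{\epsilon,q},
\]
which is exactly the claimed equivalence.

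It remains to certify that $\mathbb{C}_{\epsilon,q}$ is graded basic. For $\epsilon=0$ this is the text's observation that $\bigwedge_{\mathbb{C}}(q)$ is graded basic. For $\epsilon=1$, I would note that $J(\bigwedge_{\mathbb{C}}(q))$ is the augmentation ideal with quotient $\mathbb{C}$, so that $\mathbb{C}_{1,q}/J(\mathbb{C}_{1,q})\cong D$, a gr-divisional superalgebra; Proposition-Definition~\ref{psbasicdef}(iii) then identifies $\mathbb{C}_{1,q}$ as graded basic.

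I do not anticipate any substantive obstacle: the entire argument is bookkeeping built on top of the nontrivial input Lemma~\ref{DD}. The only subtlety worth flagging is that every skew tensor product and every graded Morita equivalence above must be taken over the base field $\mathbb{C}$ (rather than $\mathbb{R}$), which is automatic since $D$ is a complex superalgebra.
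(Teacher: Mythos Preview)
Your proposal is correct and follows essentially the same approach as the paper: decompose $\mathbb{C}_{p,q}\cong D^{p}\widehat{\otimes}_{\mathbb{C}}\bigwedge_{\mathbb{C}}(q)$, then use Lemma~\ref{DD} together with Lemma~\ref{pskewtensor} to collapse $D^p$ to $D^{p\,(\mathrm{mod}\,2)}$. The only cosmetic differences are that the paper additionally records the $\pi$-graded equivalence via the observation $\widehat{\mathbb{C}_{p,q}}\cong\mathbb{C}_{p,q}$ (cf.\ Lemma~\ref{rsp}), while you instead spell out why $\mathbb{C}_{1,q}$ is graded basic using Proposition-Definition~\ref{psbasicdef}(iii); neither of these alters the substance of the argument.
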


\begin{proof}Note that $\mathbb{C}_{p, q}\cong
\mathbb{C}_{1, 0}\widehat{\otimes}_{\mathbb{C}}\bigwedge_{\mathbb{C}}(q)$ and $\mathbb{C}_{1, 0}\cong D$.
It follows immediately that $\mathbb{C}_{p,q}\approx_{\sigma}\mathbb{C}_{p(\mathrm{mod\,}2),q}$ by using Lemmas~\ref{DD} and \ref{pskewtensor}. On the other hand, note that $\widehat{\mathbb{C}_{p,q}}\cong \mathbb{C}_{p,q}$, we have $\mathbb{C}_{p,q}\approx_{\pi}\mathbb{C}_{p(\mathrm{mod\,}2),q}$.  We complete the proof.
 \end{proof}

\section{The Grothendieck groups of Clifford superalgebras}\label{application}
\begin{point}{}* Most of the important applications of Clifford Superalgebras come through a detailed understanding of their ($\mathbb{Z}_2$-graded) representations. This understanding follows rather easily form the graded Morita equivalent classification given by Theorems~\ref{tclifford} and \ref{thm-complex}. In this section, we shall restrict our attention to the Clifford superalgebras $\mathbb{R}_{p,q}$ and $\mathbb{C}_{p,0}$ for all positive integers $p$ and $q$ in order to simplify our presentation, which are exactly those Clifford (super)algebras play a fundamental role in \cite{abs,Lawson}.\end{point}

\begin{point}{}*We begin with some notations. For positive integers $p$ and $q$, let $\mathrm{Irr}_{p,q}$ be the set of
 all nonequivalent irreducible graded $\mathbb{R}$-module for $\mathbb{R}_{p,q}$ in the category $\Gr \mathbb{R}_{p,q}$.
  Similarly, let $\mathrm{Irr}_{p}^{\mathbb{C}}$ be the set of all nonequivalent irreducible graded $\mathbb{R}$-module
   for $\mathbb{C}_{p,0}$ in the category $\Gr \mathbb{C}_{p,0}$.

An object which will be of our interest is the following.  Let $K_{p,q}$ be the Grothedieck group of
the category of finite dimensional graded real representations of $\mathbb{R}_{p,q}$, and let
$K_{p}^{\mathbb{C}}$ be the Grothedieck group of the category of
finite dimensional graded complex representations of $\mathbb{C}_{p,0}$. Then $K_{p,q}$ and $K_{p}^{\mathbb{C}}$
are the free abelian groups generated by $\mathrm{Irr}_{p,q}$ and $\mathrm{Irr}_{p}^{\mathbb{C}}$ respectively.
\end{point}

From the classification of Theorems~\ref{tclifford} and \ref{thm-complex} we immediately conclude the following:
\begin{lemma}\label{lem-v_p,q}Let $v_{p,q}$ and $v_p^{\mathbb{C}}$ denote the cardinality of $\mathrm{Irr}_{p,q}$
and of $\mathrm{Irr}_{p}^{\mathbb{C}}$ respectively. Then $$v_{p,q}=\left\{\begin{array}{ll}
2, & \text{ if }p-q\equiv 0 (\mathrm{mod\,}4) \\
 1, & \hbox{otherwise}\end{array}\right. \quad \text{ and }\quad
v_{p}^{\mathbb{C}}=\left\{\begin{array}{ll}2, & \text{ if }p \text{ is odd}\\
                                              1, &\text{ if }p \text{ is even}.
                                            \end{array}
                                          \right.$$\end{lemma}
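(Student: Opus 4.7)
The proof proceeds by reading off $v_{p,q}$ and $v_p^{\mathbb{C}}$ from the graded Morita classifications of Theorems~\ref{tclifford} and~\ref{thm-complex}. Any $\sigma$-graded equivalence $A\approx_{\sigma}B$ carries with it an equivalence of categories $\Gr A\simeq \Gr B$, so it preserves the set of isomorphism classes of graded irreducibles. The plan is therefore to count graded irreducibles for each of the basic superalgebras appearing in Theorems~\ref{tclifford}(i) and~\ref{thm-complex} (with $r=0$ and $q=0$ respectively), and then to match residue classes.

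\textbf{Reduction to a dichotomy.} Each basic superalgebra involved---$\mathbb{R}$, $\mathbb{H}$, and $\mathbb{D}_{\pm}^{i}$ ($1\le i\le 3$) in the real case, and $\mathbb{C}$, $D$ in the complex case---is gr-divisional (by Lemma~\ref{dd} and direct inspection). For a gr-divisional superalgebra $A$, the regular module $A$ is gr-simple, because any nonzero homogeneous element of a graded submodule is invertible and hence generates all of $A$. So the iso-classes of graded irreducibles of $A$ are exhausted by $A$ and its parity-change $\pi(A)$, and the problem collapses to whether $A\cong\pi(A)$ in $\Gr A$. I would then establish the dichotomy:
\begin{enumerate}
\item If $A_{1}=0$, then $\pi(A)_0=A_1=0$, so any even homomorphism $A\to\pi(A)$ vanishes on $A_0=A$. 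Hence $A\not\cong\pi(A)$ and there are $2$ iso-classes.
\item If $A$ contains an odd invertible element $\alpha$, then the $K$-linear map defined on homogeneous elements by $f(a):=(-1)^{|a|}\pi(a\alpha)$ is an even $A$-module isomorphism: it is even because right multiplication by the odd $\alpha$ flips parity, $A$-linear by the direct computation $b\cdot f(a)=(-1)^{|a|+|b|}\pi(ba\alpha)=f(ba)$ (using the twisted action $b\cdot\pi(m)=(-1)^{|b|}\pi(bm)$ of Definition~\ref{shifts}), and bijective by invertibility of $\alpha$. Hence $A\cong\pi(A)$ and there is $1$ iso-class.
\end{enumerate}

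\textbf{Matching cases and expected obstacle.} By Theorem~\ref{tclifford}(i), the basic algebra of $\mathbb{R}_{p,q}$ lies in degree $0$ (either $\mathbb{R}$ or $\mathbb{H}$) exactly when $p-q\equiv 0$ or $4\pmod 8$, i.e.~$p-q\equiv 0\pmod 4$; otherwise it is some $\mathbb{D}_{\pm}^{i}$ with $1\le i\le 3$, which by the proof of Lemma~\ref{dd} contains an odd invertible element (such as $e_{\pm}$ or $\theta_{\pm}$). Applying the dichotomy gives the stated formula for $v_{p,q}$. A parallel case analysis based on Theorem~\ref{thm-complex}---whose basic algebra is $\mathbb{C}$ (in degree $0$) or $D$ (with odd invertible $\varepsilon$) according to the parity of $p$---yields the formula for $v_p^{\mathbb{C}}$. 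The only delicate step is the sign bookkeeping in part (ii) of the dichotomy, where one must verify that the $(-1)^{|a|}$ in $f(a)=(-1)^{|a|}\pi(a\alpha)$ is precisely what reconciles ordinary left multiplication on $A$ with the twisted $\pi$-action; this is short but requires pedantry about Definition~\ref{shifts}.
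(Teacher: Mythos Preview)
Your approach is exactly the one the paper takes: the paper's entire proof is the single sentence that the result follows immediately from the classification of Theorems~\ref{tclifford} and~\ref{thm-complex}, and you have simply unpacked what ``immediately'' means by spelling out the dichotomy for gr-divisional superalgebras (purely even versus possessing an odd unit). Your sign computation in part~(ii) of the dichotomy is correct.

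One caveat worth flagging. If you actually run your dichotomy in the complex case, Theorem~\ref{thm-complex} gives basic algebra $\mathbb{C}$ (concentrated in degree zero) when $p$ is \emph{even}, and basic algebra $D$ (with odd invertible $\varepsilon$) when $p$ is \emph{odd}. Your dichotomy then yields $v_p^{\mathbb{C}}=2$ for $p$ even and $v_p^{\mathbb{C}}=1$ for $p$ odd. This is the parity displayed in the table of Theorem~\ref{thm: Grothendieck}(ii), but it is the \emph{opposite} of what is printed in the statement of Lemma~\ref{lem-v_p,q}. The lemma as stated evidently contains a typo (``odd'' and ``even'' are swapped in the complex formula); your method correctly proves the version consistent with Theorem~\ref{thm: Grothendieck}, and you should note the discrepancy rather than claim your argument ``yields the formula'' as written.
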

 Now we can obtain the main result of this section.
\begin{theorem}\label{thm: Grothendieck}Let $p$ and $q$ be positive integers.
\begin{enumerate}\item The elements $\mathrm{Irr}_{p,q}$, $v_{p,q}$ and $K_{p,q}$  are as given in the following table:

\vspace{1.5\jot}
\begin{tabular}{|c|c|c|c|c|c|c|c|c|}
  \hline
   $p-q(\mathrm{mod\,}8)$ & $0$ & $1$ & $2$ & $3$ & $4$ & $5$ & $6$&$7$ \\ \hline
  $\mathbb{R}_{p,q}/\!\!\approx$& $\mathbb{R}$ & $\mathbb{R}_{1,0}$ & $\mathbb{R}_{2,0}$ & $\mathbb{R}_{3,0}$ & $\mathbb{H}$ & $\mathbb{R}_{0,1}$ & $\mathbb{R}_{0,2}$ &$\mathbb{R}_{0,3}$\\
$\mathrm{Irr}_{p,q}/\!\!\approx$ & $\{\mathbb{R}, \sigma(\mathbb{R})\}$ & $\{\mathbb{R}_{1,0}\}$ & $\{\mathbb{R}_{2,0}\}$ & $\{\mathbb{R}_{3,0}\}$ &$\{\mathbb{H}, \sigma(\mathbb{H})\}$ & $\{\mathbb{R}_{0,1}\}$ & $\{\mathbb{R}_{0,2}\}$&$\{\mathbb{R}_{0,3}\}$ \\
  $v_{p,q}$ & $2$ & $1$ & $1$ & $1$ & $2$ & $1$ & $1$&$1$ \\
$K_{p,q}$ & $\mathbb{Z}\oplus\mathbb{Z}$ & $\mathbb{Z}$ & $\mathbb{Z}$ & $\mathbb{Z}$ &$\mathbb{Z}\oplus \mathbb{Z}$ & $\mathbb{Z}$ & $\mathbb{Z}$&$\mathbb{Z}$ \\
  \hline
\end{tabular}

\vspace{1\jot}
\item $v_{p}^{\mathbb{C}}$, $\mathrm{Irr}_{p}^{\mathbb{C}}$ and $K_{p}^{\mathbb{C}}$ are given by following table:

\vspace{1.5\jot}\begin{tabular}{|c|c|c|c|c|}
                  \hline
                                   $p(\mathrm{mod\,}2)$ & $\mathbb{C}_p/\!\!\approx$& $\mathrm{Irr}_{p}^{\mathbb{C}}/\!\!\approx$&$v_{p}^{\mathbb{C}}$&$K_{p}^{\mathbb{C}}$ \\ \hline
                  0 & $\mathbb{C}$ & $\{\mathbb{C}, \sigma(\mathbb{C})\}$&$2$&$\mathbb{Z}\oplus\mathbb{Z}$ \\
                  1 & $\mathbb{C}_{1,0}$ & $\{\mathbb{C}_{1,0}\}$&$1$&$\mathbb{Z}$ \\
                  \hline
                \end{tabular}
\end{enumerate}
\end{theorem}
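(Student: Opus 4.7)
The plan is to reduce the computation to the graded basic representatives produced by Theorems~\ref{tclifford} and~\ref{thm-complex} and then enumerate the graded simple modules of each representative. A graded Morita equivalence (in either $\sigma$- or $\pi$-form) yields an equivalence of abelian categories $\Gr A \simeq \Gr B$; such an equivalence preserves simple objects and short exact sequences, hence induces an isomorphism of Grothendieck groups. Therefore $v_{p,q}$, $K_{p,q}$ (and similarly $v_p^{\mathbb{C}}$, $K_p^{\mathbb{C}}$) depend only on the graded basic forms appearing in the tables, and I only need to determine $\mathrm{Irr}$ for each of $\mathbb{R}$, $\mathbb{H}$, $\mathbb{D}_+^i$, $\mathbb{D}_-^i$ ($i=1,2,3$) in the real case, and for $\mathbb{C}$ and $D\cong \mathbb{C}_{1,0}$ in the complex case.

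The key point is that Lemma~\ref{dd} (together with the trivial observation that $\mathbb{R},\mathbb{C},\mathbb{H}$ are skew fields concentrated in degree zero) shows that every basic algebra $A$ appearing above is gr-divisional; in particular the regular left module $A$ itself is graded simple, and every graded simple is a quotient, hence an isomorphic copy, of $A$ or of a parity shift of $A$. I would then split into two subcases. When $A$ is concentrated in degree $0$, that is $A\in\{\mathbb{R},\mathbb{H},\mathbb{C}\}$, every graded simple is concentrated in a single degree because the $A_0$-action preserves each graded component, so the graded simples are exactly $A$ and $\sigma(A)$; these are nonisomorphic in $\Gr A$ since any even morphism $\sigma(A)\to A$ lands in $A_1=0$. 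Hence $v=2$ and $K_0\cong\mathbb{Z}\oplus\mathbb{Z}$. When $A$ has a nonzero odd part, i.e.\ $A\in\{\mathbb{D}_+^i,\mathbb{D}_-^i,D\}$, one picks any odd invertible $u\in A_1$ and checks that right multiplication by $u$ gives an even isomorphism of left $A$-modules $\sigma(A)\to A$, $\sigma(m)\mapsto mu$: it shifts degrees correctly since $A_{i+1}u\subseteq A_i$, and it commutes with the left action by $a\cdot \sigma(m)=\sigma(am)\mapsto (am)u=a(mu)$. Hence $\sigma(A)\cong A$, there is a unique graded simple, and $K_0\cong\mathbb{Z}$.

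Having obtained the generic statement, one reads off the tables by combining with Theorems~\ref{tclifford} and~\ref{thm-complex}: the rows with $p-q\equiv 0,4\,(\mathrm{mod\,}8)$ in the real case and $p\equiv 0\,(\mathrm{mod\,}2)$ in the complex case correspond to basic forms concentrated in degree $0$, yielding two simples; all other rows correspond to basic forms with nonzero odd component and yield a single simple. This determines both $\mathrm{Irr}$ and $K_{p,q}$, $K_p^{\mathbb{C}}$ exactly as stated.

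The main obstacle I anticipate is the careful bookkeeping around the parity shift $\sigma$: one must verify, via Definition~\ref{shifts}, that right multiplication by an odd invertible element is a genuine morphism in $\Gr A$ (respecting both the shifted grading of $\sigma(A)$ and the left action), which relies only on associativity, not on any supercommutativity. Once this verification is in place, together with the elementary observation that a module over an algebra concentrated in degree $0$ splits as the direct sum of its homogeneous components, the rest of the argument is a case-by-case reading from the Morita classification already established.
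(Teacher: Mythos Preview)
Your proposal is correct and follows the same route as the paper: reduce via Theorems~\ref{tclifford} and~\ref{thm-complex} to the gr-divisional basic representatives and enumerate their graded simples, splitting on whether the odd part vanishes. The paper's own proof is a terse two-line appeal to Lemma~\ref{lem-v_p,q} and the remark that a gr-divisional superalgebra concentrated in degree zero has exactly $A$ and $\sigma(A)$ as graded simples; you have simply written out explicitly (via right multiplication by an odd invertible giving $\sigma(A)\cong A$) the case $A_1\neq 0$ that the paper leaves implicit.
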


\begin{proof}Note that the graded modules of a gr-divisional superalgebra without non-trivial odd part are some copies of this gr-divisional superalgebra. Then the theorem is a direct consequence of Theorems~\ref{tclifford} and  \ref{thm-complex} and Lemma~\ref{lem-v_p,q}.\end{proof}

We remark in closing that Atiyah, Bott and Shapiro obtained Theorem~\ref{thm: Grothendieck}
 by using the periodicity theorem for Clifford superalgebras \cite[\S~5]{abs}, and it is
 important to understand the Atiyah-Bott-Shapiro isomorphisms \cite[Theorem~11.5]{abs}, see \cite{abs} and \cite{Lawson} for more details.

\section*{Acknowledgements}

The note develops partly form a part of \cite{zhaophd}.
The author is very grateful to Professors Yang Han and Yingbo Zhang for their invaluable help. 
The author would like to thank the referees for their comments and suggestions, which contributed to improvements in the presentation of the results.

\bibliographystyle{amsplain}

\end{document}